\theoremstyle{thmstyleone}%
\newtheorem{theorem}{Theorem}
\newtheorem{lemma}[theorem]{Lemma}
\theoremstyle{thmstyletwo}%
\theoremstyle{thmstylethree}%
\newtheorem{definition}{Definition}%
\begin{document}

\title[Article Title]{Reduction of a 
biochemical network mathematical model by means of approximating activators and inhibitors as perfect inverse relationships.}

\author*[1]{\fnm{Chathranee} \sur{Jayathilaka}}\email{chathranee.aththanapolaarachchilage@monash.edu}

\author[3]{\fnm{Robyn} \sur{Araujo}}\email{r.araujo@qut.edu.au}
\equalcont{These authors contributed equally to this work.}

\author[2]{\fnm{Lan} \sur{Nguyen}}\email{Lan.K.Nguyen@monash.edu}
\equalcont{These authors contributed equally to this work.}

\author*[1]{\fnm{Mark} \sur{Flegg}}\email{Mark.Flegg@monash.edu}

\affil*[1]{\orgdiv{Department of Mathematics}, \orgname{Monash University}, \orgaddress{\street{Clayton}, \state{Victoria}, \country{Australia}}}

\affil[2]{\orgdiv{Monash Biomedicine Discovery Institute}, \orgname{Monash University}, \orgaddress{\street{Clayton}, \state{Victoria}, \country{Australia}}}

\affil[3]{\orgdiv{School of Mathematical Sciences}, \orgname{Queensland University of Technology}, \orgaddress{\street{ Brisbane}, \state{QLD}, \country{Australia}}} 


\abstract
{ Models of biochemical networks are usually presented as connected graphs where vertices indicate proteins and edges are drawn to indicate activation or inhibition relationships. These diagrams are useful for drawing qualitative conclusions from the identification of topological features, for example positive and negative feedback loops. These topological features are usually identified under the presumption that activation and inhibition are inverse relationships. The conclusions are often drawn without quantitative analysis, instead relying on rules of thumb. We investigate the extent to which a model needs to prescribe inhibition and activation as true inverses before models behave idiosyncratically; quantitatively dissimilar to networks with similar typologies formed by swapping inhibitors as the inverse of activators. The purpose of the study is to determine under what circumstances rudimentary qualitative assessment of network structure can provide reliable conclusions as to the quantitative behaviour of the network and how appropriate is it to treat activator and inhibitor relationships as opposite in nature. }

\keywords{Cell signalling, Differential equations, Signalling networks, Signal transduction pathway, Systems biology}

\maketitle


\section{Introduction}\label{sec1}

Whilst there are different approaches to mathematically model biochemical networks, a commonly used model formalism describes continuous changes in concentration of molecular species over time using ordinary differential equations (ODEs), commonly known as reaction rate equations (RREs) \cite{jiang2022identification}. ODE models are studied to better understand generic biochemical network topologies and motifs as well as dynamics of well-defined pathways in specific organisms \cite{vaseghi2001signal, swameye2003identification}. RREs are almost always non-linear as they describe protein interactions \cite{rao2014model}. Reaction rates can vary greatly, giving rise to different time scales. Elaborate integrated interactions give rise to complex nonlinear behaviours, such as ultra-sensitivity~\cite{angeli2004detection}, bi-stability~\cite{rombouts2021dynamic}, and oscillation~\cite{russo2009equilibria}.  
 Numerous studies of RREs have revealed general network properties that give rise to these nonlinear behaviours and, at the same time, give high level understanding for how biochemical networks encode cellular scale responses to stimuli. The general network properties are qualitative in nature, identified by investigating only the network topology, and are now often considered without substantial accompanying quantitative mathematical analysis. Some of these network-scale properties that can be qualitatively associated with complex behaviours include feedback loops, feed-forward loops, cross-talk, compartmentalisation, and noise~\cite{schwartz1999interactions}. Whilst it may be easy to identify a macro-feature responsible for certain behaviour, it is not always easy to \textit{predict} the behaviour simply because the macro-feature is present without quantitative analysis.

 Analysing the topological structure of the network through its interactive diagram is often a focal point for understanding the system’s driving mechanisms and qualitative behaviour \cite{navlakha2014topological, glass1975classification, glass1972co}. A common qualitative technique involves systematically processing the causal links described by the diagram’s edges \cite{gilbert2006petri}. For example, if a node $i$ inhibits node $j$ which inhibits node $k$, it is qualitatively assumed that the two inhibitions counteract each other in such a way that this may be seen as node $i$ activates a `reverse' of node $j$ which then activates node $k$; in both cases, node $i$ indirectly activates node $k$. This qualitative equivalency is shown in Figure \ref{fig:similarity}. Typically, this equivalency is assumed irrespective of the specific mathematical representation of the model. However, the similarities between these two interpretations can vary depending on the quantitative description of activators and inhibitors; in particular, how closely these relationships are opposites/inverses of each other. The ability to discern when two pathways will result in `similar' behaviour is highly useful. In this simple example, we may be able to scientifically agree on a single oriented model (say the one with only activators), which characterises all ``qualitatively similar'' networks. In the absence of any standard form, we define the oriented model as a model which contains all activation interactions along predefined linear pathways embedded within the network. In the case where activators and inhibitors can be interchanged (exactly or approximately), the oriented model characterises a broad collection of models whereby activation and inhibition are able to be used interchangeably as opposites; thereby making it easier to identify similar networks as well as the nature of features such as feedback, feed forward and cross-talk.

 The pertinent question is `What properties allow for the exact or approximate reduction of a biochemical mathematical model to its oriented model?'. We shall focus our attention predominantly on a property of a biochemical model which we call `model bias' (the tendency for an interaction to be conferred directly effecting activation or deactivation of the downstream node) and secondly on a property which we call `model symmetry' (the tendency for an interaction to be conferred directly as a result of elevated or lowered amounts of the upstream node). We find that for `symmetric models' (where equal changes above and below a rest state in the upstream node confer equal and opposite changes in the downstream node), model bias plays a central role in determining the approximate tractability of a network to an oriented form. We investigate how the network connections together with model bias determine the suitability of this oriented form simplification through a numerical study.

The manuscript is structured in the follow way. In Section \ref{sec:Frame}, we mathematically define the framework for orientation of a biochemical network. We also define an unbiased symmetric model; a standard for which exact equivalence between a model and its oriented form can be shown. In Section \ref{sec:Numerics}, we explore increases in intrinsic bias in a model and how this mainly into divergence in its oriented form and in particular how this divergence is regulated specifically by network/pathway properties. We study systematically increasing complexity from simple chain-like pathways, to an example of an integrated pathway with cross-talk. Our studies provide new insights into when reduction of a network with an oriented form \textit{may} be appropriate and when caution should be taken.

\begin{figure}
    \centering
    \includegraphics[width=0.3\textwidth]{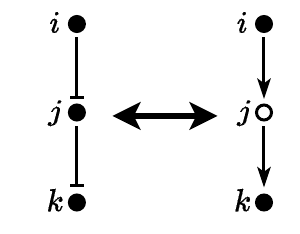}
    \caption{A model diagram of a simple pathway (left) and its oriented form (right). Solid nodes represent protein concentrations and hollow nodes represent a reverse/flip of a node (decreasing if the protein level is increased and vice versa). The oriented form shows identical behaviour in the output node $k$ as the simple pathway under unbiased and symmetric conditions defined later the manuscript. The oriented form allows for the more simple identification that input node $i$ indirectly activates output node $k$. }
    \label{fig:similarity}
\end{figure}

\section{Model Framework}
\label{sec:Frame}

In the context of systems biology, the terms `pathway' and `network' are sometimes used interchangeably. A `pathway' typically refers to a relatively small, well-defined set of entities and relationships, such as a specific signal transduction pathway \cite{alberts2002general}. As the name implies, it should be clear in a pathway where the signal `begins' and where it `ends'. On the other hand, a `network' frequently refers to a larger, less constrained set of entities and relationships \cite{azeloglu2015signaling}. A network, on the other hand, is often a set of pathways connected through cross-talk and feedback mechanisms. We will define both a pathway and a network (the latter consisting as a collection of the former) as a directed graph where each node $x_i$, $i=1,\ldots,N$, is represented by a scalar state variable (with the same label), and each edge determines distinct terms on the RHS of the dynamical system describing the rates of change in these variables. Importantly, we use the term `state' rather than concentration. Non-dimensionalisation of the active protein concentrations associated with each node followed by shifting allows us define each state variable on the range $x_i\in[-1,1]$. Here 0 represents half of the saturation concentration and -1 and 1 represents no protein and maximum protein respectively. We choose this framework because we consider that absence (and not just presence) of a protein can cause a response in downstream nodes.

The general mathematical model we investigate consists of the differential equations (\ref{rreswing});
 \begin{equation}\label{rreswing}
\frac{\mathrm{d} \mathbf{x}}{ \mathrm{d} t} = \boldsymbol{\psi}(\mathbf{x}),
\end{equation}
describing the evolution of all $N$ state variables $\mathbf{x} = (x_1,x_2,\ldots, x_N)^{\dagger}$. Here $\psi_i = \varrho_i^+ - \varrho_i^-$ and
\begin{align}
\varrho_i^+ &= \sum_{\mathcal{E}\in \mathcal{E}_i} r_{\mathcal{E}}^+(x_i,y_{\mathcal{E}};\mathcal{T}_{\mathcal{E}}) , \quad \text{and} \label{rplus} \\
\varrho_i^- &= \sum_{\mathcal{E}\in \mathcal{E}_i} r_{\mathcal{E}}^-(x_i,y_{\mathcal{E}};\mathcal{T}_{\mathcal{E}}) , \label{rminus}
\end{align}
where each of the sums are taken over the set of edges $\mathcal{E}_i$ that point towards the node $x_i$, $r_{\mathcal{E}}^\pm$ are functions that completely depend on the model and the type of edge $\mathcal{T}_{\mathcal{E}}\in\{ \mathcal{A},\mathcal{I}\}$ (activator $\mathcal{A}$ or inhibitor $\mathcal{I}$). These are functions of the node $y_{\mathcal{E}}$ from which each edge $\mathcal{E}$ originates. Finally, some edges in a network may not come explicitly from another node in the network. These edges represent sources or external stimuli to the system and must point towards a node in the network and for these edges it is assumed in the context of (\ref{rplus}) and (\ref{rminus}) that $y_{\mathcal{E}} = 0$ such that $y_{\mathcal{E}}$ and $y_{\mathcal{E}}^*$ are non-zero and balanced. We use the superscript asterisk to represent a `flip' in a state variable; negating it algebraically and representing it in diagrams by changing nodes between solid and hollow style.

An edge $\mathcal{E}$ is associated with the qualitative description of an activator or inhibitor. Activation is achieved in one of two ways, either $r_{\mathcal{E}}^+$ relatively increases with $y_{\mathcal{E}}$ and/or $r_{\mathcal{E}}^-$ relatively decreases with $y_{\mathcal{E}}$.  Inhibition is associated with opposite conditions.

In particular, for a given activation edge $\mathcal{E}$ connecting node $y$ to $x$, we require by definition that \begin{equation} \label{condact}\frac{\partial  r_{\mathcal{E}}^+(x,y;\mathcal{A})}{\partial y} \geq \frac{\partial  r_{\mathcal{E}}^-(x,y;\mathcal{A})}{\partial y} \end{equation} everywhere in the state space determined by $x$ and $y$. On the other hand, we require \begin{equation} \label{condin}\frac{\partial  r_{\mathcal{E}}^+(x,y;\mathcal{I})}{\partial y} \leq \frac{\partial  r_{\mathcal{E}}^-(x,y;\mathcal{I})}{\partial y} \end{equation} everywhere if an edge is to be an inhibition edge.

\subsection{Model Bias and Symmetry}

The conditions (\ref{condact}) and (\ref{condin}) can be achieved by adjusting either side of the inequalities. Treating activation as the opposite of inhibition requires that condition (\ref{condin}) is the same as (\ref{condin}) after swapping $+$ and $-$ then swapping the direction of the inequality. We will show it is necessary and sufficient for activation and inhibition to equal and opposite in this respect if both sides of the inequalities are equal and opposite in magnitude (unbiased) and this magnitude is the same for each condition (symmetric). We define these properties mathematically as follows.

\begin{definition} \label{biasedge}
An edge $\mathcal{E}$ of type $\mathcal{T}\in\{ \mathcal{A},\mathcal{I}\}$ is \textit{unbiased} if it defines the function pair $\{ r_\mathcal{E}^+,r_\mathcal{E}^-\}$ in the model (\ref{rreswing}) and (\ref{rplus})-(\ref{rminus}) such that, for all $(x,y)\in [-1,1]\times[-1,1]$,
\begin{equation}
r_{\mathcal{E}}^{+}(x,y;\mathcal{T}) = r_{\mathcal{E}}^{-}(-x,-y;\mathcal{T}).
    \end{equation}
    The edge is considered to be \textit{positively biased} if $r_{\mathcal{E}}^{+}(x,y;\mathcal{T}) > r_{\mathcal{E}}^{-}(-x,-y;\mathcal{T})$ and \textit{negatively biased} if $r_{\mathcal{E}}^{+}(x,y;\mathcal{T}) < r_{\mathcal{E}}^{-}(-x,-y;\mathcal{T})$ for all $(x,y)\in [-1,1]\times[-1,1]$. 
\end{definition}

\begin{definition} \label{biasedmodel}
A model or biochemical network is \textit{positively} or \textit{negatively} biased or \textit{unbiased} if all edges in the model or network are positively or negatively biased or unbiased, respectively. 
\end{definition}

\begin{definition}\label{symedge}
An edge $\mathcal{E}$ is \textit{symmetric} if, in the context of the model (\ref{rreswing}) and (\ref{rplus})-(\ref{rminus}), the functions $r_\mathcal{E}^\pm$ for activation and inhibition are defined such that
\begin{equation}
r_{\mathcal{E}}^{\pm}(x,y;\mathcal{A}) = r_{\mathcal{E}}^{\pm}(x,-y;\mathcal{I}),
    \end{equation}
     for all $(x,y)\in [-1,1]\times[-1,1]$. 
     The edge is considered to be \textit{activator weighted} if $r_{\mathcal{E}}^{\pm}(x,y;\mathcal{A}) > r_{\mathcal{E}}^{\pm}(x,-y;\mathcal{I})$ and \textit{inhibitor weighted} if $r_{\mathcal{E}}^{\pm}(x,y;\mathcal{A}) < r_{\mathcal{E}}^{\pm}(x,-y;\mathcal{I})$ for all $(x,y)\in [-1,1]\times[-1,1]$. 
\end{definition}

\begin{definition} \label{symmodel}
A model or biochemical network is \textit{activator} or \textit{inhibitor} weighted or \textit{symmetric} if all edges in the model or network are {activator} or {inhibitor} weighted or {symmetric}, respectively. 
\end{definition}

In practise, each node of a biochemical network graph/diagram often represents a protein concentration. Concentrations can either be increased/activated (increasing the state variable for the node) or decreased/deactivated (decreasing the state variable for the node). To better explain the mechanisms which determine bias and asymmetry (weighting) in a model, it is useful to add detail to each node and  consider an `active' (solid dots) and `inactive' (hollow dots) component. Of course, these are just labels as we assume that information may equally use the active and inactive components and that these labels just determine the positive and negative direction of the node state variable. A protein $x_i$ can be activated or deactivated and the mathematical model encodes this in the terms $\varrho_i^+$ and $\varrho_i^-$ respectively. Showing generic nodes $x$ and $y$ where $y$ affects $x$ through an edge, it is useful to visualise the mechanistic manner with which bias and weight/asymmetry are manifested in models. We catalogue the different extreme cases for both activator and inhibitor in Figure \ref{fig:definitions}.

\begin{figure}[H]
    \centering
    \includegraphics[width=1\textwidth]{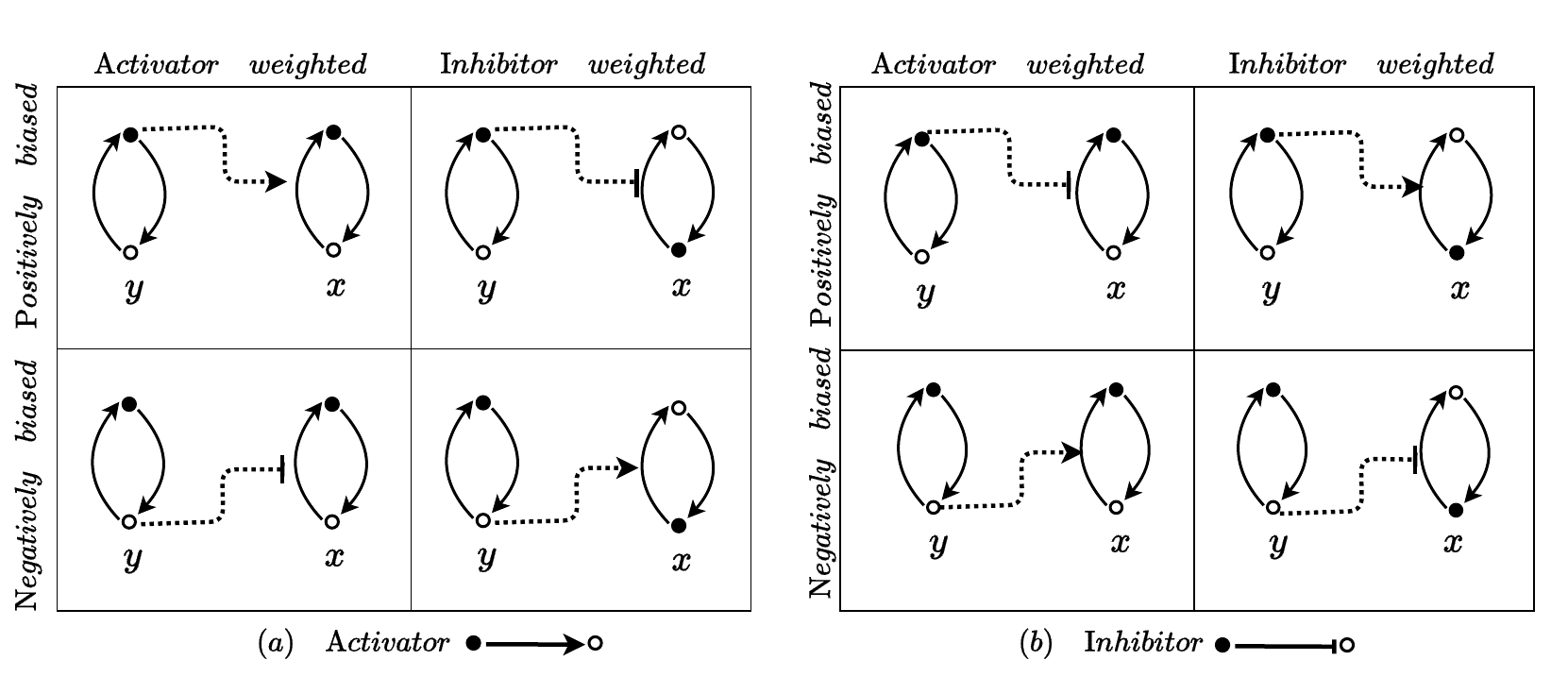}
    \caption{Mechanistic diagrams for asymmetric and biased activator (a) and inhibitor (b) model edges. Each node $y$ (upstream) and $x$ (downstream) is represented as an actively switching chemical species between `active' (solid dot) and `inactive' (hollow dot) forms. The switching from active to inactive form is represented in the model for each node by $\varrho^-$ (Equation (\ref{rminus})) and from inactive to active by $\varrho^+$ (Equation (\ref{rplus})) which are influenced by the network edges. The dotted connectors indicate how biased (negative or positive) and weighted (activator or inhibitor) models for the network edges mechanistically confer activation or inhibition from node $y$ to node $x$.}
    \label{fig:definitions}
\end{figure}

Bias and weighting just indicate deviation away from the unbiased symmetric case as it is difficult to assign a sensible generalised quantitative definition to these qualities. We focus primarily on bias in this manuscript as issues with symmetry can be somewhat mitigated by ensuring that the amplitude $r_{\mathcal{E}}$ is unchanged when comparing activators and inhibitors. It is possible to get a sense of increasing and decreasing bias by plotting the nullcline in the ($x$-$y$) state space that corresponds to $r_{\mathcal{E}}^+(x,y) = r_{\mathcal{E}}^-(x,y)$ (the steady state in $x$ caused by a state variable $y$ in the absence of other edges). We show these nullclines for a symmetric model edge of activator (Fig \ref{fig:dynamics_act}) and inhibitor (Fig \ref{fig:dynamics_inh}) type respectively. The unbiased case can be seen clearly in the solid black curves. Increasing negative bias is shown by the dashed blue curves whereas increasing positive bias is shown by the dashed red curves. Increasing bias shifts the nullcline further to the left or right respectively when compared to the unbiased case in black. Furthermore, the unbiased case necessarily has the feature that the nullclines can be rotated around $(0,0)$ an angle of $\pi$ without changing the nullcline.

 \begin{figure}[H]
      \centering
      \begin{subfigure}[b]{0.43\textwidth}
          \centering
          \includegraphics[width=\textwidth]{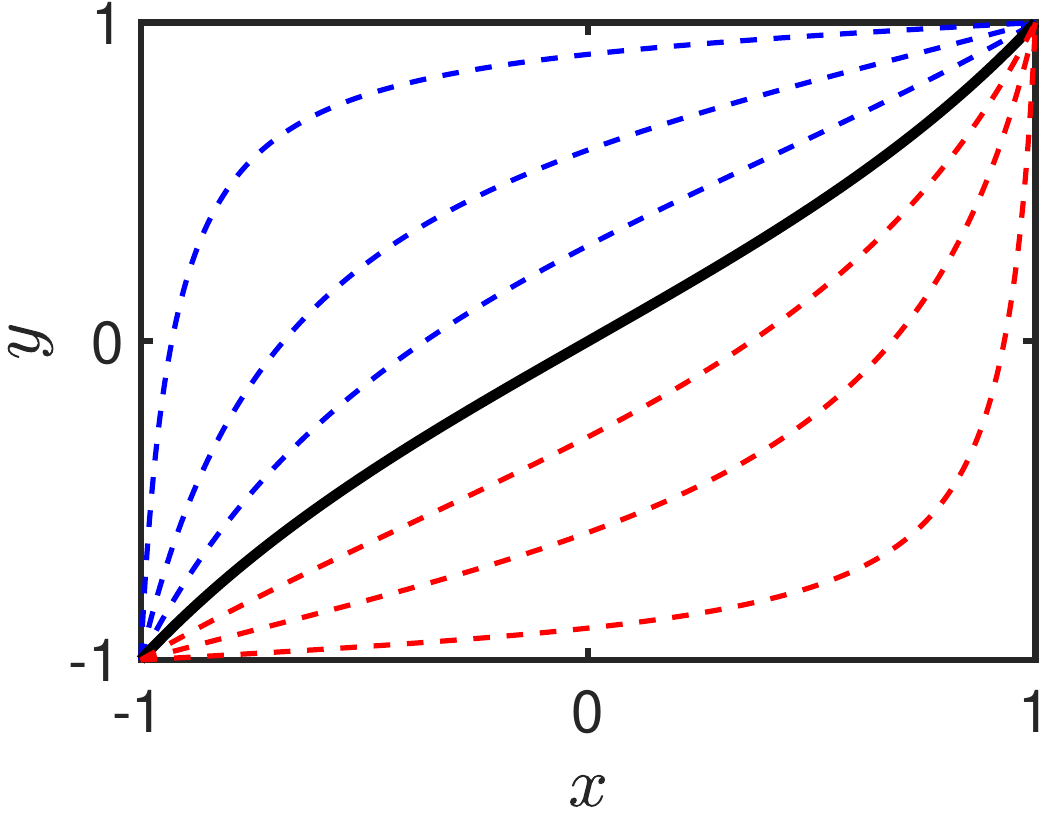}
          \caption{\label{fig:dynamics_act}}
      \end{subfigure}
     \hspace{1cm}
      \begin{subfigure}[b]{0.43\textwidth}
          \centering
          \includegraphics[width=\textwidth]{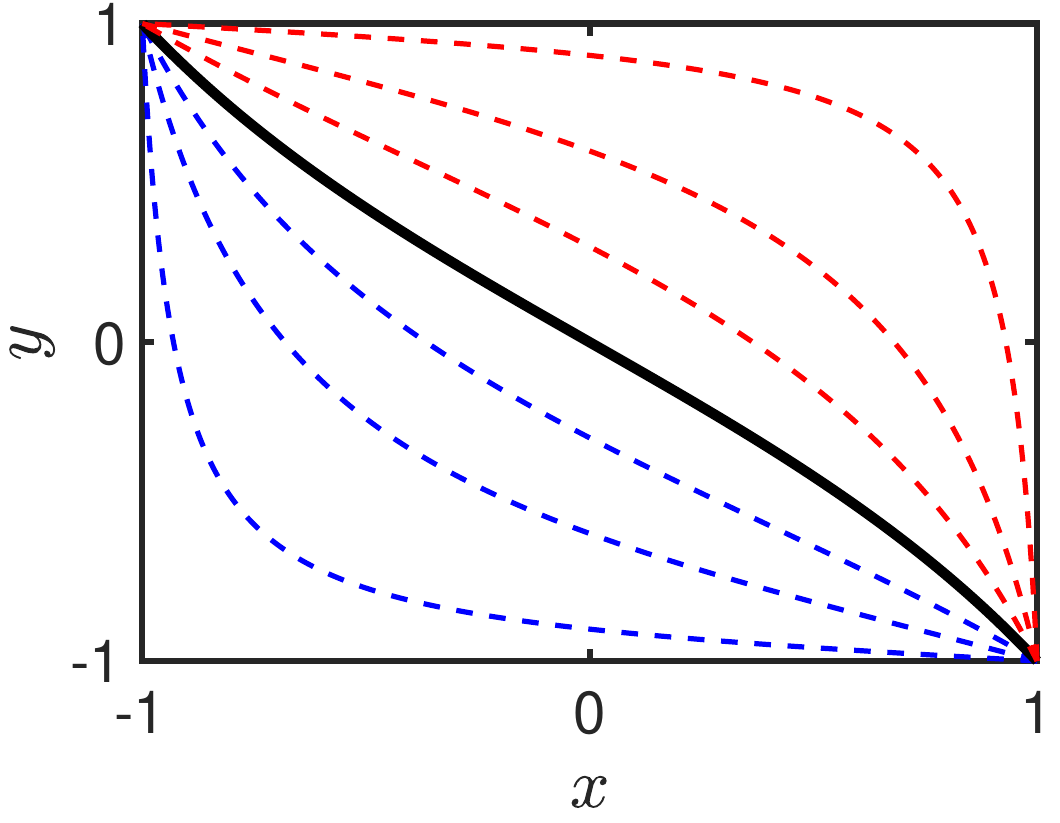}
          \caption{\label{fig:dynamics_inh}}
      \end{subfigure}
      \caption{Example symmetric activator (a) and inhibitor (b) nullclines defined by  $r_{\mathcal{E}}^{+}(x,y;\mathcal{T})=r_{\mathcal{E}}^{-}(x,y;\mathcal{T})$. The solid black curves corresponds to an example unbiased model. Blue dotted lines show the nullclines for increasingly net negative bias whilst and red dotted lines show the nullclines for increasingly net positive bias.}
 \end{figure}

\subsection{Oriented pathways and pathway similarity}

Consider the example of the simple linear pathway shown in Figure \ref{fig:similarity}. If it is justified that both representations behave the same, we can orient the pathway by a standard `oriented' form. We shall denote the \textit{oriented form} of the pathway as that portrayed on the right of Figure \ref{fig:similarity}; that is, where all relationships are denoted as activators. If we are unable to treat activators and inhibitors as opposites (because the model is biased or assymetric) then it is still qualitatively feasible that the oriented and unoriented pathways of Figure \ref{fig:similarity} (and others) behave `similarly' but not exactly the same. To what degree bias results in dissimilar/divergent oriented and unoriented pathways is the focus of this paper. Here we define how to find the oriented form of a pathway and a network.

\begin{definition}
    A (linear) \textit{pathway} of length $N\geq 1$ is a set of ordered nodes $\{x_i\}_{i=1}^{N}$ connected by $N-1$ edges $\{\mathcal{E}_i\}_{i=1}^{N-1}$ where $\mathcal{E}_i$ connects $x_i$ to $x_{i+1}$. The pathway must have a prescribed \textit{input} node $x_1$ and \textit{output} node $x_N$. All non-output nodes are \textit{internal} nodes.
\end{definition}

\begin{definition}
    A \textit{network} is a set of one or more pathways possibly connected by cross-talk and/or connected to external edges (edges not associated with a pathway/external stimuli).
\end{definition}

\begin{definition}
    Two networks are \textit{equivalent} (or \textit{similar}) if all constituent pathway output nodes have identical (or similar) response outputs.
\end{definition}

\begin{definition}
    An \textit{oriented} pathway is one which is equivalent or similar to any given pathway but contains all activation edges. An \textit{oriented} network is one that contains all oriented pathways. A pathway or network is called \textit{orientable} if it has an oriented form, which is equivalent.
\end{definition}

These definitions bring us to a key result expressed in the following lemma and generalised in the proceeding theorem.

\begin{lemma}\label{thm1}
All unbiased and symmetric pathways are orientable.
\end{lemma}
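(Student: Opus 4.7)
The plan is to show that any unbiased and symmetric pathway $x_1 \to x_2 \to \cdots \to x_N$ can be converted, by a sequence of internal node ``flips,'' into a dynamically equivalent pathway in which every edge is an activator. Since such a transformation preserves $x_N(t)$ (the internal relabellings $x_i \mapsto x_i^* = -x_i$ do not affect the output node), the resulting all-activator pathway will serve as the oriented form of the original, establishing orientability.

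First I would prove a local \emph{flip identity}: flipping an internal node $x_j$ to $x_j^*$ is dynamically equivalent to leaving the rest of the pathway alone but switching the types of the two edges incident to $x_j$ between $\mathcal{A}$ and $\mathcal{I}$. Starting from $\frac{d x_j^*}{dt} = \varrho_j^- - \varrho_j^+$, the incoming contributions require expressing each $r^-_\mathcal{E}(x_j, y; \mathcal{T})$ in the form of a $+$-rate in the variable $x_j^*$. The unbiased hypothesis gives $r^-_\mathcal{E}(x_j, y; \mathcal{T}) = r^+_\mathcal{E}(-x_j, -y; \mathcal{T}) = r^+_\mathcal{E}(x_j^*, -y; \mathcal{T})$, after which the symmetric hypothesis converts the sign reversal of $y$ into a type swap, yielding $r^+_\mathcal{E}(x_j^*, y; \mathcal{T}^*)$ with $\mathcal{T}^*$ the opposite type of $\mathcal{T}$. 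The mirror manipulation handles the $\varrho_j^+$ contributions, and a parallel computation on the outgoing edge from $x_j$ to $x_{j+1}$ (substitute $x_j = -x_j^*$ into its rate and apply symmetry once) flips its type as well.

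Next I would induct on the pathway length $N$. The case $N=1$ is vacuous, and $N=2$ requires at worst a single flip of the input $x_1$ to turn an inhibitor into an activator; any external stimulus edge is unaffected, because $y_\mathcal{E}=0$ and the symmetric hypothesis at $y=0$ collapses $r^\pm_\mathcal{E}(x,0;\mathcal{A})$ and $r^\pm_\mathcal{E}(x,0;\mathcal{I})$, so no type ambiguity arises. For $N\geq 3$, if the terminal edge $\mathcal{E}_{N-1}$ is an inhibitor, flip $x_{N-1}$: by the flip identity this makes $\mathcal{E}_{N-1}$ an activator, preserves $x_N(t)$, and at worst switches the type of $\mathcal{E}_{N-2}$. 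The prefix subpathway $x_1,\ldots,x_{N-1}$ (with $x_{N-1}$ or $x_{N-1}^*$ now playing the role of output) is still unbiased and symmetric, so by the inductive hypothesis it can be oriented using only flips of $x_1,\ldots,x_{N-2}$. None of those flips touch $x_{N-1}$, so $\mathcal{E}_{N-1}$ remains an activator in the final configuration, and the composite pathway is fully oriented.

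The hard part will be the flip identity itself: making sure that the unbiased and symmetric conditions, applied in the correct sequence, produce exactly one type swap on each incident edge (not two, which would cancel out, and not zero), and handling the external-stimulus edge case cleanly. Once this local equivalence is carefully pinned down, the inductive assembly is purely combinatorial and should go through without further complication.
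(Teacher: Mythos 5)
Your proposal is correct and takes essentially the same route as the paper: the same flip identity (unbiasedness rewrites $r^\mp$ as $r^\pm$ of the negated state, symmetry converts the sign change in the upstream variable into an $\mathcal{A}\leftrightarrow\mathcal{I}$ type swap, leaving all other nodes' dynamics untouched), followed by induction on pathway length. The only differences are cosmetic — you peel the pathway from the output end while the paper orients the length-$k$ subpathway ending at the output and then flips the input node, and you dispose of the external-stimulus edge via the $y_{\mathcal{E}}=0$ convention rather than relabelling it as an inhibition — so the argument matches the paper's in substance.
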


\begin{proof}
To prove Lemma \ref{thm1}, we first demonstrate an interesting property of unbiased and symmetric pathway/network models (\ref{rreswing}). 
Consider any node $x_i$ which may have any number of edges pointed towards it $\mathcal{E}_i^{(\text{in})}$ and out of it $\mathcal{E}_i^{(\text{out})}$. We have
\begin{align}
    \dot{x}_i  &= \sum_{\mathcal{E}_{ki}\in \mathcal{E}_{i}^{(\text{in})}} r^{+}_{\mathcal{E}_{ki}}(x_i,x_k;\mathcal{T}_{ki}) - r^{-}_{\mathcal{E}_{ki}}(x_i,x_k;\mathcal{T}_{ki}), \quad \text{and} \\
    \dot{x}_{j} &= r^+_{\mathcal{E}_{ij}}(x_{j},x_{i};\mathcal{T}_{ij}) - r^-_{\mathcal{E}_{ij}}(x_{j},x_{i};\mathcal{T}_{ij})+ \psi_{ij}', \quad \text{for each } \mathcal{E}_{ij}\in \mathcal{E}_i^{(\text{out})},
\end{align} 

where $\mathcal{E}_{ki}$ is an edge of type $\mathcal{T}_{ki}$ that points from a node $x_k$ to $x_i$ and $\psi_{ij}'$ are due to edges that affect a node $x_j$ but are not connected to the node $x_i$. Here we have all terms which include the node $x_i$ in the model. We shall now perform a \textit{flip} of the node $x_i$ by changing its state variable to its complements. That is, everywhere, we shall write these equations replacing $x_i$ for $-x_i$. 
 \begin{align}
    \dot{x}^{\text{(flip i)}}_i  &= \sum_{\mathcal{E}_{ki}\in \mathcal{E}_i^{(\text{in})}} - r^+_{\mathcal{E}_{ki}}(-x_i,x_k;\mathcal{T}_{ki}) + r^-_{\mathcal{E}_{ki}}(-x_i,x_k;\mathcal{T}_{ki}), \quad \text{and} \\
    \dot{x}_j^{\text{(flip i)}} &= r^+_{\mathcal{E}_{ij}}(x_j,-x_i;\mathcal{T}_{ij}) - r^-_{\mathcal{E}_{ij}}(x_j,-x_i;\mathcal{T}_{ij})+ \psi_{ij}', \quad \text{for each } \mathcal{E}_{ij}\in \mathcal{E}_i^{(\text{out})}
\end{align}
Consider now coupling this flipping of node $x_i$ operation with the operation of also performing a \textit{flip} in the type of all edges in $\mathcal{E}_i^{(\text{in})}$ or $\mathcal{E}_i^{(\text{out})}$. That is, we change respective types for all edges connected to node $i$ $\mathcal{T}\rightarrow \mathcal{T}^*$ (which represents interchanging $\mathcal{A}\rightarrow \mathcal{I}$ and $\mathcal{I}\rightarrow \mathcal{A}$). We notice that if the model is symmetric, $\dot{x}_j^{\text{(flip i)}}$ reverts back to $\dot{x}_j$ after implementation of Definition \ref{symedge}. On the other hand, since the model is also unbiased, after implementation of Definition \ref{biasedge}, $\dot{x}^{\text{(flip i)}}_i$ becomes $-\dot{x}_i$ as expected after flipping both the node $x_i$ and all connected edges. Therefore, if a model is unbiased and symmetric then flipping any node $x_i$ along with all of its connecting (in and out) edges from activation to inhibition and vice versa will leave all other nodes in the model completely unchanged. We continue this proof as a special case of Theorem \ref{thm2}.

\end{proof}

\begin{theorem}\label{thm2}
All unbiased and symmetric networks are orientable.
\end{theorem}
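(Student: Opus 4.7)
The plan is to continue directly from the key property already established in the proof of Lemma~\ref{thm1}: for any unbiased and symmetric model obeying (\ref{rreswing})--(\ref{rminus}), performing a simultaneous ``flip'' on a node $x_i$ (the substitution $x_i \mapsto -x_i$) together with a type flip $\mathcal{T}\to\mathcal{T}^*$ on every edge incident to $x_i$ leaves the governing equations for every other node formally unchanged, and changes the equation for $x_i$ only by an overall sign. This property was proved node-by-node using only Definitions~\ref{biasedge} and~\ref{symedge}, so it applies to arbitrary networks (with cross-talk, feedback, and external inputs) exactly as it applied in the pathway setting.

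Given this invariance, the strategy is constructive: exhibit a subset $S$ of internal nodes whose simultaneous flipping converts every edge in every constituent pathway into an activator. I would encode the problem as a parity assignment: assign $s_v\in\{0,1\}$ to each node $v$, with $s_v=0$ forced whenever $v$ is an input or output of some pathway (to preserve observed responses and inputs). Under the flipping operation, an edge of original type $\mathcal{T}$ between $v$ and $w$ becomes an activator iff $s_v\oplus s_w$ matches the parity of $\mathcal{T}$ (equal types for activators, opposite types for inhibitors). Walking each pathway from input to output and toggling $s$ at each successive internal node whenever the current edge is an inhibitor produces a well-defined $S$ for that pathway, and the operation commutes across pathways because the flip-invariance property at a node does not depend on any other node being flipped.

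The main obstacle is consistency when pathways share nodes or when feedback creates cycles: a node reached along two different pathways could be assigned different flip-states by the two local rules, and cycles with an odd parity of inhibitors could prevent a valid 2-coloring. I would handle the first issue by observing that shared internal nodes simply have their flip-state fixed once by whichever pathway encounters them first; for each subsequent pathway, the downstream internal flip-states are \emph{forced} along the remainder of that pathway, and the key property guarantees that this forced assignment still produces an activator-only pathway. For cycles, I would argue that the oriented network need only match the original on pathway output responses, not on every edge sign inside a cycle; cycle edges that remain as inhibitors after the global flip can be absorbed into the oriented form by treating them as activators between the (already defined) flipped representatives of their endpoints, which is precisely what the hollow/solid node convention of Section~\ref{sec:Frame} encodes.

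The final step is to apply all flips in $S$ simultaneously and invoke the flip-invariance property inductively on $|S|$: each individual flip preserves the dynamics of non-flipped nodes (in particular, of all pathway outputs), and after $|S|$ flips every pathway edge has been converted to an activator. The resulting network is therefore equivalent to the original in the sense of the equivalence definition and contains only activation edges on each pathway, which is precisely the definition of an oriented network. The delicate part of the write-up is the bookkeeping for shared nodes and cycles; the algebraic content is a direct reuse of the symmetry and unbiasedness identities already exploited in Lemma~\ref{thm1}.
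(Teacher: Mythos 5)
Your overall strategy --- reuse the flip-invariance property from Lemma~\ref{thm1} and apply a parity bookkeeping over the nodes --- is the right idea, but there is a concrete flaw in the bookkeeping: you force $s_v=0$ at \emph{input} nodes as well as output nodes. In the paper's definitions only output nodes must be preserved (``All non-output nodes are \textit{internal} nodes''), and the input node is precisely the node the paper flips when necessary: in the base case of a single inhibition edge (Fig.~\ref{fig:pf1}) the only way to make the pathway edge an activator is to flip the input node and accept that the external stimulus edge becomes an inhibition (this is exactly what Algorithm~\ref{algorithm} does and what Fig.~\ref{fig:comparepathway}(b) shows, with the stimulus redrawn in blue). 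Under your constraint the flip states along a pathway are forced by parity once one endpoint is pinned, so pinning \emph{both} endpoints makes the assignment infeasible whenever a pathway contains an odd number of inhibition edges --- your construction therefore fails already on the paper's own base case. The fix is simply to drop the constraint at inputs, walk each pathway from its output toward its input (as the paper's induction does: orient the length-$k$ subpathway from $x_2$ to $x_{k+1}$, then flip $x_1$ if the remaining edge is an inhibition), and allow the external stimulus edge to change type, since external edges are not pathway edges and need not be activators in the oriented form.

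A related weak point is your treatment of shared nodes and cycles. Your claim that a flip state inherited from one pathway still ``produces an activator-only pathway'' on a second pathway is not justified: if the second pathway's output is pinned, a forced assignment propagating downstream from a shared node can demand flipping that output, and nothing in the flip-invariance property rescues this. In the paper's framework this conflict does not arise because the network is decomposed so that each node belongs to exactly one pathway and pathways interact only through cross-talk edges; cross-talk and feedback edges (like external stimuli) are not pathway edges, so they may legitimately remain inhibitions after orientation (see Fig.~\ref{fig:network}(b)), and no 2-coloring of cycles is required. Your closing remark about ``treating cycle edges as activators between flipped representatives'' conflates this: they need not be treated as activators at all. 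With the input pinning removed and the shared-node claim replaced by the paper's one-pathway-per-node decomposition, your argument reduces to essentially the paper's induction plus Algorithm~\ref{algorithm2}.
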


\begin{proof}

Lemma \ref{thm1}, and by trivial extension Theorem \ref{thm2}, can be proven by induction. We prove this by showing orientability of a single pathway noting that a network of pathways can be approached in the same way. The induction base case for a pathway is where $N=2$ (a pathway with a single edge). The pathway consists of the output node $x_i$ and input node, $x_j$ as shown in Fig. \ref{fig:pf1}. If the edge connecting $x_j$ to $x_i$ is an activation, then it is already in oriented form. If not, then, as we have shown, it is possible to \textit{flip} the node $x_j$ along with all edges connected to $x_j$ without changing the output in the manner just described. The subsequent oriented pathway is therefore equivalent to the original, and thus a pathway of length $N=2$ is orientable if the model is symmetric and unbiased. It is important here that in performing the flipping of the node $x_j$ and the edge that the node $x_i$ is left unchanged (despite the fact that $x_j$ has become $-x_j$) because $x_j$ is an internal node whilst $x_i$ is the output node that needs to be identical if equivalence is to be achieved between the oriented and unoriented pathways.
 \begin{figure}[H]
 \centering
 \includegraphics[width=0.3\textwidth]{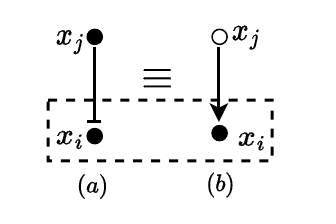}
 \caption{\label{fig:pf1} A figure showing the equivalence in (a) the signalling pathway with $N=2$ and a single inhibition edge with (b) its oriented form. A hollow node is used to indicate that it is the complement (negative/flipped) of the original state variable for this node. The output nodes are in a dashed box to indicate that these nodes are unchanged between the unoriented and oriented pathways making these pathways equivalent.}
 \end{figure}

We now assume that any unbiased symmetric pathway of length $k\geq 2$ is orientable. Consider a pathway of length $N=k+1$ with ordered nodes $\{x_i \}_{i=1}^{k+1}$. This pathway contains within it a pathway of length $k$ with input node $x_2$ and output node $x_{k+1}$. Since the pathway of length $k$ is orientable, we may perform any necessary flips of internal nodes and edges to generate the oriented form of the pathway from node $x_2$ to $x_{k+1}$. At this point, we are now permitted to flip node $x_1$ and all connected edges to place the full pathway into oriented form if the edge from $x_1$ to $x_2$ is an inhibition edge (otherwise the full pathway is already in oriented form). Thus, if a pathway of length $k$ is orientable then so is a pathway of length $k+1$. Application of this process to all pathways in a network proves Theorem \ref{thm2}.
\end{proof}

Inspired by the proof of Theorem \ref{thm2}, the follow algorithm is used to orient any given pathway.

\begin{algorithm}[H]
\SetAlgoLined
\KwData{A signalling pathway with $N$ nodes $\{x_i\}_{i=1}^{N}$ and $N-1$ edges $\{\mathcal{E}_i\}_{i=1}^{N-1}$}
\KwResult{Finding the associated oriented pathway}
$x_i \leftarrow x_N$ \Comment{Start at the $N^{th}$ node} \; \\
\For{all edges $\mathcal{E}_i$ from $\mathcal{E}_N$ to $\mathcal{E}_2$}{
\If{$\mathcal{E}_{i-1}$ is an inhibition}{
    {change node $x_{i-1}$ by flipping all connected edges\;
    }
}
    $x_i\leftarrow x_{i-1}$
}
\caption{Finding an oriented pathway}
\label{algorithm}
\end{algorithm}

The steps outlined in Algorithm \ref{algorithm} are shown in Figure \ref{fig:algo1}.

\begin{figure}[H]
\centering
\includegraphics[width=0.9\textwidth]{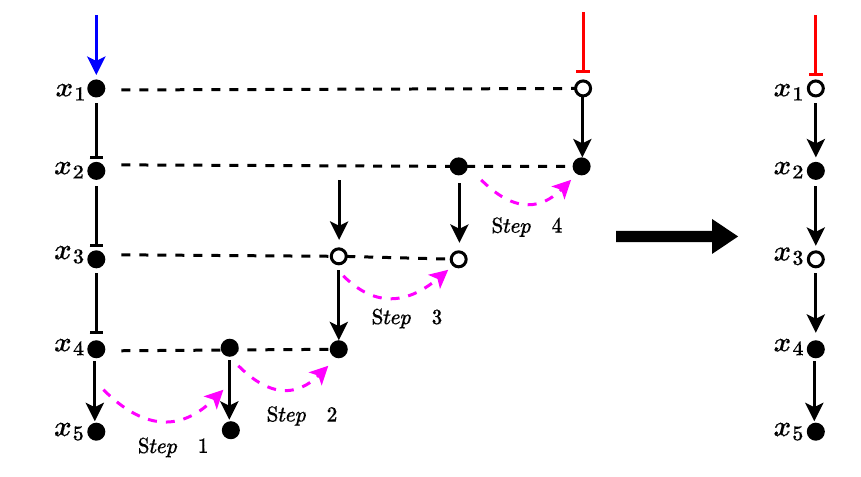}
\caption{An example of how to convert a pathway into its oriented form using the steps outlined in Algorithm \ref{algorithm}}
\label{fig:algo1}
\end{figure}

Generalisation to orienting a full network is done using Algorithm \ref{algorithm2}.

\begin{algorithm}[H]
\SetAlgoLined
\KwData{A signalling network.}
\KwResult{Finding the associated oriented network}
Identify all constituent signalling pathways comprising the network. \\
\For{all pathways}{
Perform Algorithm \ref{algorithm}.
} 
\caption{Finding a oriented network.}
\label{algorithm2}
\end{algorithm}


\section{Numerical Results and Discussion}
\label{sec:Numerics}

Evident by the prevalence of the idea of protein `activation' and `deactivation', most biochemical network models have bias. We conduct a numerical exploration of positive and negative bias and the robustness of the oriented form approximation. We focus first on single pathways and then explore the effect of feedback and network cross-talks. As expected, making the networks more complicated makes the pathways more idiosyncratic (diverges away from the behaviour of a common oriented form). However, for a given network, the oriented form can be much more robust if the bias is positive or negative depending on the situation. Furthermore, we find that for simple networks robustness is strong as bias is increased but beyond a critical bias robustness is lost rapidly and the oriented form does not represent the unoriented network; in some cases even behaving in an opposite way.

\subsection{Test model}\label{testmodel}

We use a caricature test model so that we can increase or decrease bias explicitly. The model is parameterised by three parameters: $\alpha$, $\beta$, and $\phi$. The parameter $\alpha$ describes the amplitude (timescale), $\beta$ prescribes variable nonlinearity, and $\phi$ is a proxy for the model's bias and is chosen independently for each edge. This model is unlikely to represent a real biological system. That being said, by the manipulation of parameters $\alpha$, $\beta$ and $\phi$ there is a lot of flexibility in the model functions $r_\mathcal{E}^\pm(x,y;\mathcal{T}_\mathcal{E})$. In this sense, the test model allows for generic observations in the role of bias in the approximation that inhibition is the opposite of activation in the context of increasingly complex networks. The rates $r_{\mathcal{E}}^+$ and $r_{\mathcal{E}}^-$ are defined as follows: 

For activation, 
\begin{flalign}
    &r_{\mathcal{E}}^{+}(x,y;\mathcal{A}) = \Biggl( \frac{1+\phi}{4}\Biggl)(1+y) F(x;\alpha,\beta)\label{eq:twoway_act_mod1} \\
    &r_{\mathcal{E}}^{-}(x,y;\mathcal{A}) = \Biggl( \frac{1-\phi}{4}\Biggl)(1-y)F(-x;\alpha,\beta)\label{eq:twoway_act_mod2}.
\end{flalign}
For inhibition,
\begin{flalign}
    &r_{\mathcal{E}}^{+}(x,y;\mathcal{I}) = \Biggl( \frac{1+\phi}{4}\Biggl)(1-y) F(x;\alpha,\beta)\label{eq:twoway_act_mod3} \\
    &r_{\mathcal{E}}^{-}(x,y;\mathcal{I}) = \Biggl( \frac{1-\phi}{4}\Biggl)(1+y)F(-x;\alpha,\beta)\label{eq:twoway_act_mod4}.
\end{flalign}

where, 
\begin{equation}
F(x;\alpha,\beta) = \frac{\alpha\beta(1-x)}{2\beta-(1+x)}.
\end{equation}

Here $\phi = 0$ indicates no bias and $\phi>0$ ($\phi<0$) indicates positive (negative) bias. The function $F$ is based on Michaelis-Menten enzyme kinetics but translated to our state variable framework (where state variables vary between -1 and 1). Our study uses MATLAB's ODE45 solver to run simulations. Specifically, where not stated, for each bias value $\phi$, we test a total of 150 diverse sets of $\alpha$ and $\beta$ values to better isolate behaviour attributable to bias and network topology rather than from non-linearity and timescale.

\subsection{Linear pathways}
\subsubsection{Unbiased linear pathways}

We begin by investigating single linear pathways of up to 5 nodes activated at the input node. Node 1 is the input node and Node 5 is the output node.

In Fig. \ref{fig:comparepathway} we compare the time evolution of each node for a sample pathway against its oriented form obtained by Algorithm \ref{algorithm}. In this particular case, we demonstrate numerically Theorem \ref{thm2} by setting the bias $\phi = 0$. The observation here is that the outputs (orange) are identical between pathway and oriented pathway whilst all flipping of internal nodes that is done in the orienting process correctly negates the time evolution of the node at all times.

It is clear that when there is no (or very little) bias in the symmetric model, observing two inhibitions in series is quantitatively identical (or very similar) to two activators. This is because, at least  qualitatively, it is understood that inhibiting an inhibitor is akin to a double negative; resulting in a positive. Understanding this equivalence (and when it is appropriate to assume it) makes it easier to qualitatively assess the pathway (and network) behaviour from generic topological relationships -- as is common with biochemical network models. 

\begin{figure}[H]
\centering
\includegraphics[width=0.71\textwidth]{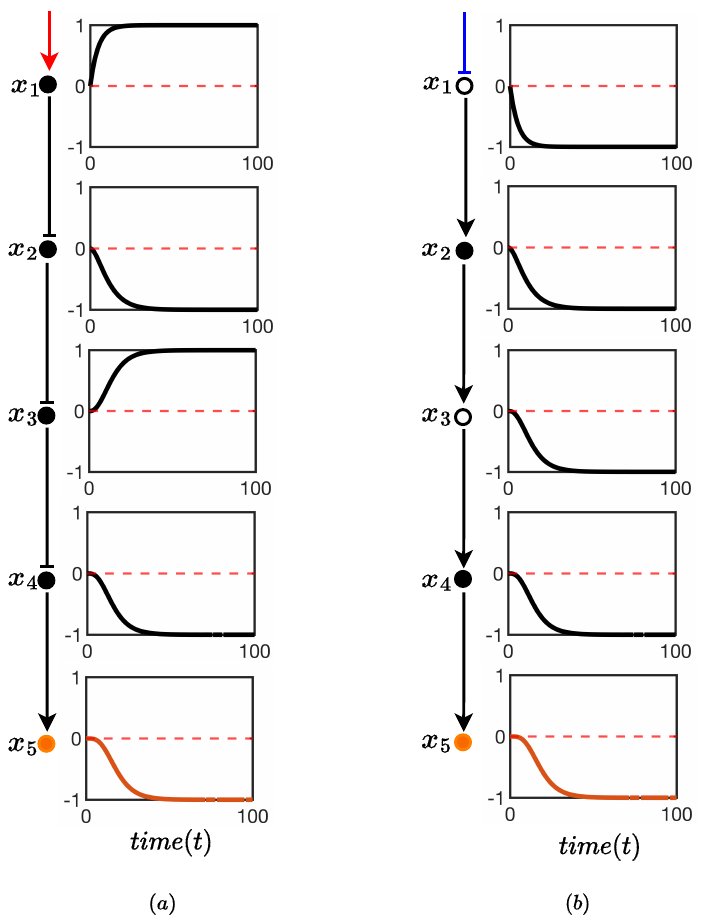}
\caption{(a) A cellular biochemical signalling pathway consisting of five nodes $x_i$ for $i=1,2,3,4,5$. The black circles and connectors show the pathway, and the orange node $x_5$ indicates the output node of the pathway. An external stimulus is indicated by a red arrow, which activates the node $x_1$. The (symmetric) test model (Section \ref{testmodel}) is used to model the pathway edges. In this model, each edge of the network is associated with two parameters, $\alpha_{ij}$ and $\beta_{ij}$ where $i$ is the source and $j$ is the destination of the connector between two nodes.  The parameter values for $\alpha$ and $\beta$ were chosen arbitrarily as $\alpha = \{\alpha_{12},\alpha_{23},\alpha_{34},\alpha_{45}\}= \{0.5,1,2,1.3\}$ and $\beta = \{\beta_{12},\beta_{23},\beta_{34},\beta_{45}\}= \{10,5,15,18\}$. A boxes on the right side each node displays the state variables of each node from $t=0$ to $t=100$. All states are initialised in the neutral position of $0$. 
The pathway in (b) is the oriented representation of the signalling pathway shown in (a). The hollow nodes represent nodes that are flipped in the orienting process described by Algorithm \ref{algorithm}. Since this process results in a flipped $x_1$ the input edge should be interpreted as an inhibition which is indicated in blue. In choosing $\phi = 0$ (no bias) and noting that $x_5$ is the same for (a) and (b) we have verified the result of Theorem \ref{thm2}.}
\label{fig:comparepathway}
\end{figure}

\subsubsection{Biased linear pathways}

We now shift our investigation to the robustness of the oriented pathway reduction of single pathways with bias. To compare a pathway/network against its oriented form we measure the similarity between the two by focusing on two metrics. In both metrics we initialise the pathway and its oriented form in the neutral position (all node states equal to zero). We then run the model for both oriented and unoriented forms numerically until steady state. The first metric comparing the unoriented to oriented descriptions is the difference in the long term behaviour. We have restricted our study to non-oscillatory systems which limit to fixed steady states over time. We define the steady state difference to be
\begin{equation}\label{deltass}
    \delta_{ss} = \left< \delta - \bar{\delta}\right>_{\alpha,\beta} =  \left<\lim_{t\rightarrow \infty} x_N(t) - \lim_{t\rightarrow \infty} \bar{x}_N(t)\right>_{\alpha,\beta} ,
\end{equation}  
where $\delta$ and $\bar{\delta}$ are the unoriented and oriented steady states of the output node $x_N$, respectively. The bar notation here corresponds to the oriented form. The brackets $\left<\cdot \right>_{\alpha,\beta} $ indicate averaging over many combinations of parameter sets $\alpha$ and $\beta$ describing the edges in the model.

A second metric is defined to measure the difference in the transient aspects of the response of the unoriented and oriented cases. It is constructed by first normalising each output node by its steady state and then integrating the difference between the unoriented and oriented transient output values. The resultant error is positive (negative) if the unoriented form approaches steady state faster (slower) than the oriented form. Very large values in this metric may indicate the presence of some temporal behaviour not present in the other form (for example, rebounding behaviour). This is especially the case if one of the forms responds to the input and returns close to the neutral state over time. 

\begin{equation}
\delta_\tau = \left<\int_0^\infty \left( \frac{x_N(t)}{\delta } - \frac{\bar{x}_N(t)}{\bar{\delta}} \right) \ \mathrm{d}t \right>_{\alpha,\beta}
\end{equation}

Behavioural differences between unoriented and oriented pathways can also lead to small errors $\delta_\tau$. The purpose of $\delta_\tau$ is to get insight into response times of unoriented and oriented pathways under simple conditions. This metric is less useful in the case where responses are sufficiently different in nature.

Fig. \ref{fig:delta_ss} displays the steady-state error $\delta_{ss}$ for all possible linear pathways of length $N=5$. Each chart in the figure is a plot of $\delta_{ss}$ versus $\phi$ in the model. That is, left of centre represented increasing negative bias and right of centre represents increasing positive bias. The pathways in the figure are shown at the top of each chart and immediately below these pathways are the oriented forms (showing specifically which nodes are flipped in the orienting process). The charts themselves are strategically organised into columns based on the total number of nodes that are flipped in order to create the oriented form (one node on the left, two nodes in the centre, three and four nodes on the right). From top to bottom, we order the charts such that in the orienting process flips that are generally more upstream are at the top whereas downstream flips are generally down the bottom. We also plot each chart in red if, after the orienting process, the input stimulus does not change type from activation but in blue if the input stimulus changes from activation to inhibition.

Setting the charts out like that in Fig. \ref{fig:delta_ss} shows a number of interesting and nontrivial properties for linear pathways:
\begin{enumerate}
    \item Locally around the unbiased case $\phi = 0$ we see good agreement in steady states between the unoriented and oriented pathways.
    \item Agreement with the oriented pathway is very robust if the model is \textit{negatively} biased \text{and} the oriented form requires a flip in the input stimulus from activation to inhibition. If there is no flip in the input stimulus then agreement is very robust if the model is \textit{positively} biased.
    \item As bias is increased, agreement with the oriented pathway remains robust up until some critical level of bias at which stage agreement rapidly decreases until under some conditions the worst cases reach $\delta_{ss} \approx \pm 2$ indicating that the oriented and unoriented forms of the pathway limit towards extreme opposite outputs.
    \item The critical bias, before disagreement with the oriented form rapidly increases, is closer to the unbiased case -- that is, the oriented form is less robust -- if the orienting process flips over nodes in higher quantities \textit{or} further downstream. Pathways which satisfy these criterion also have more extreme disagreements with their oriented form.
\end{enumerate}

These observations were also consistent with results found for pathways of lengths $N=3$ and $N=4$ (not shown here).


\begin{figure}[H]
    \centering
    \includegraphics[width=.8\linewidth]{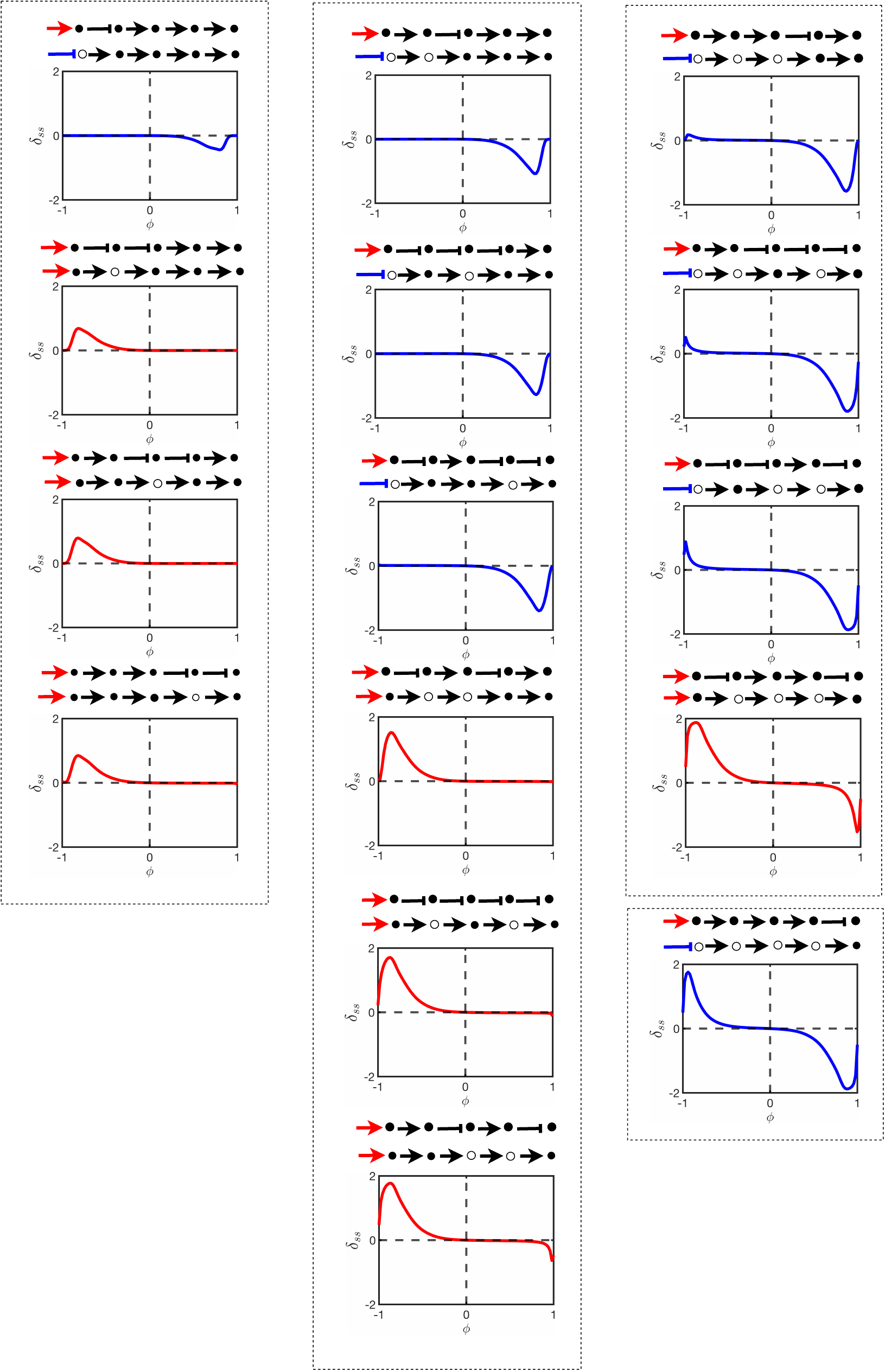} 
     \caption{This figure displays the steady-state error $\delta_{ss}$ of different signalling pathways of length $N=5$ as defined by Equation (\ref{deltass}). The model used is the test model in Section \ref{testmodel}. The error is plotted in charts as functions of the bias parameter $\phi$ and the pathway with its oriented form are shown above each chart. The colors of the plots correspond to flipping (blue) or no flipping (red) of the input stimulus as a result of the orienting process. The columns separate the number of flipped nodes in the orienting process whilst each column is ordered from top to bottom where these flips occur more upstream or downstream respectively.}
    \label{fig:delta_ss}
\end{figure}

It is also interesting to observe that disagreement between the oriented and unoriented forms as a result of bias compounds as the number of flips increases. This can be seen in Fig. \ref{fig:all_in_one} where the sum of all errors for each of the four single flip pathways (left column) in Fig. \ref{fig:delta_ss} are plotted against the steady state error associated with the pathway which requires all four flips to orient (bottom right chart in Fig. \ref{fig:delta_ss}). The later error eclipses the sum of the former errors however in both cases robustness remains fairly strong for a significant interval around $\phi = 0$ (the unbiased case).

\begin{figure}[H]
    \centering
    \includegraphics[width=.6\linewidth]{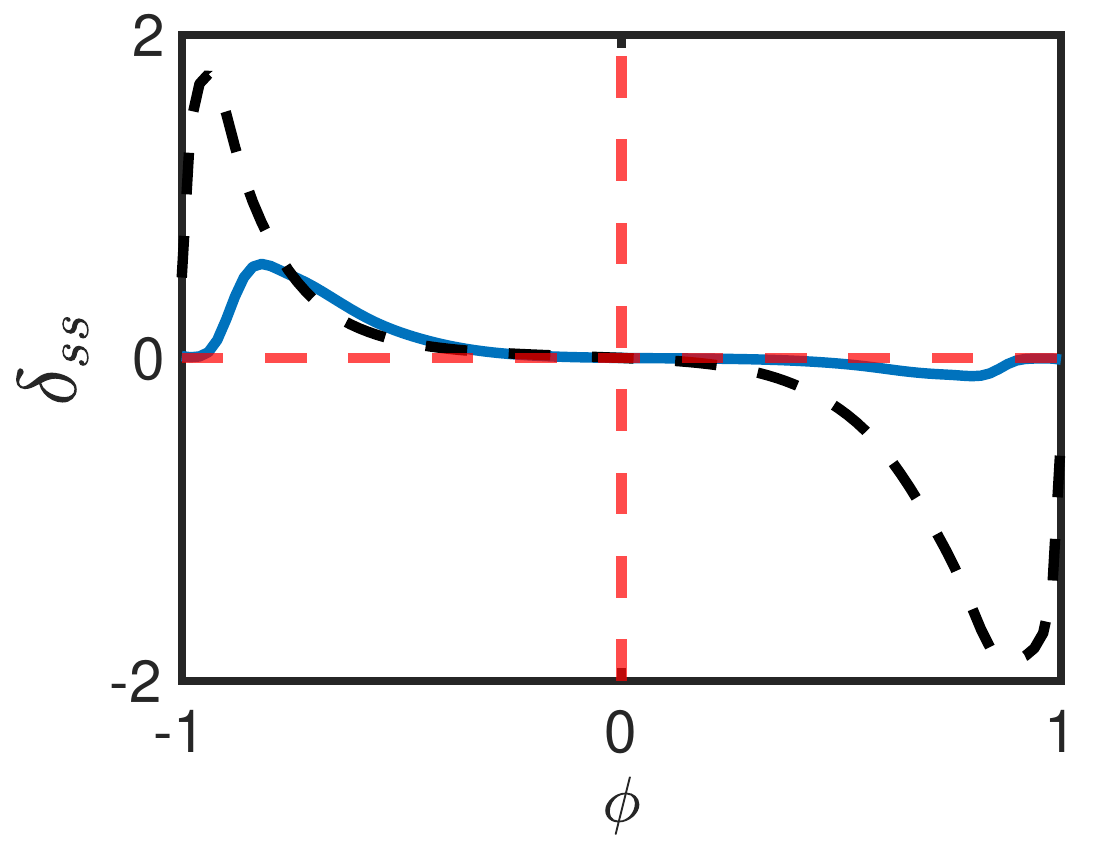}  
    \caption{The figure shows the sum of steady-state errors $\delta_{ss}$ of all pathways of length $N=5$ with one-node flips in their oriented form (blue) against the steady state error of a single pathway with and four nodes flipped in its oriented form (black dashed).}
    \label{fig:all_in_one}
\end{figure}

Using the same charting order used in Fig. \ref{fig:delta_ss}, we chart the errors $\delta_\tau$ in Fig. \ref{fig:pathway_dtau}. The plots of $\delta_\tau$ show similar behaviour under changes in bias to $\delta_{ss}$. This is especially the case for pathways requiring few flips in orienting. In this case, oriented and unoriented pathways which tend towards different steady states also take different rates in getting there. Pathways requiring many flips produce more complex behaviour over time and $\delta_\tau$ is less insightful.

\begin{figure}[H]
    \centering
    \includegraphics[width=.9\linewidth]{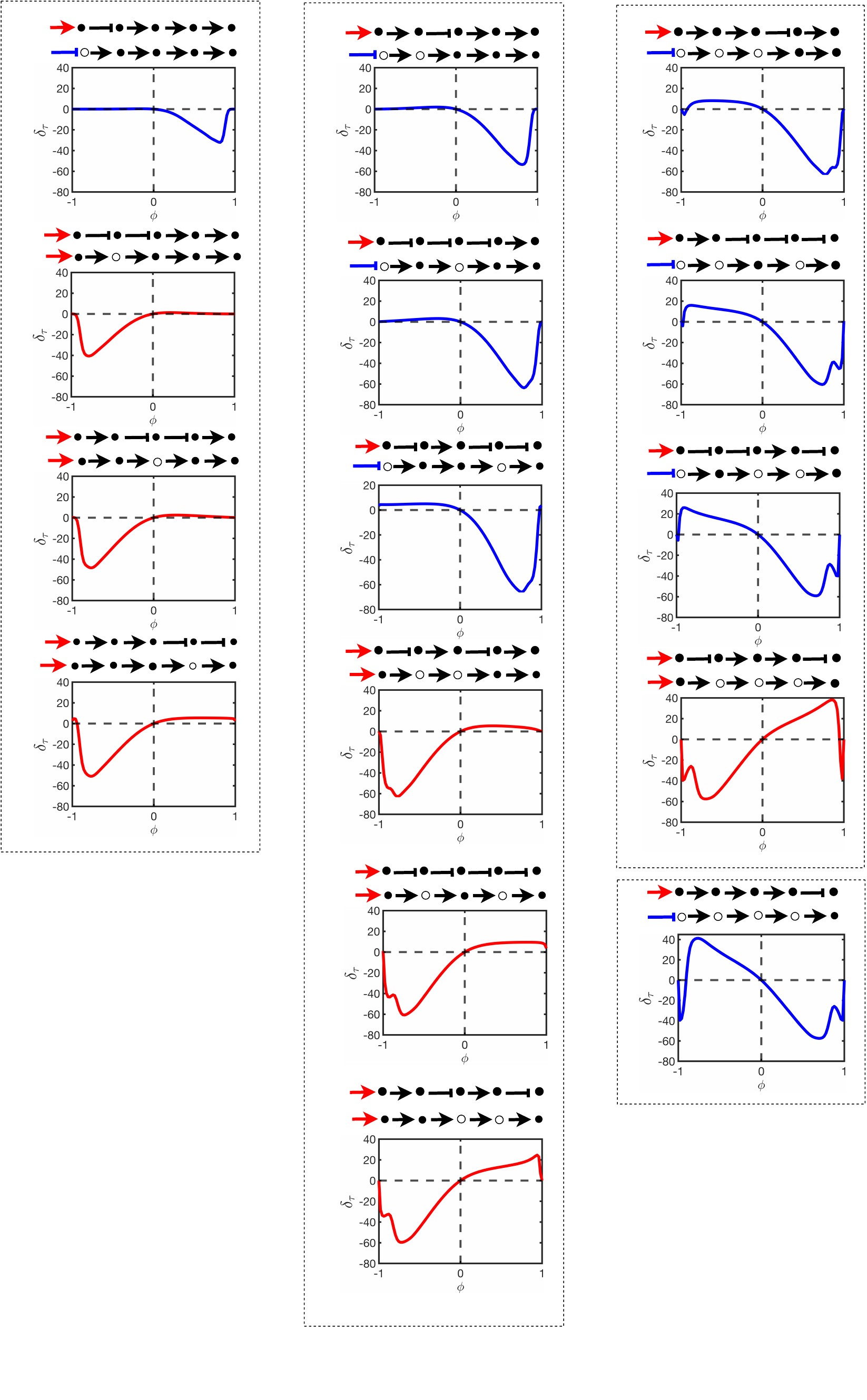}  
    \caption{This figure displays the steady-state error $\delta_{ss}$ of different signalling pathways of length $N=5$ as defined by Equation (\ref{deltass}). The model used is the test model in Section \ref{testmodel}. The error is plotted in charts as functions of the bias parameter $\phi$ and the pathway with its oriented form are shown above each chart. The colors of the plots correspond to flipping (blue) or no flipping (red) of the input stimulus as a result of the orienting process. The chart arrangement is the same as that of Fig. \ref{fig:delta_ss}.}
    \label{fig:pathway_dtau}
\end{figure}

In the following section we shift our attention to the affect of bias on the oriented form on networks. We begin with simple pathways with feedback before looking at an example network with significant integration of multiple pathways.

\subsection{Networks}

\subsubsection{Unbiased pathways with feedback}

Investigation of linear pathways in the previous section shows that care should be taken when assuming that inhibitors act as diametrically opposite activators under biased conditions in the case where there this assumption is taken in multiple instances (where there are many flips to get to the oriented form). We now attempt to investigate the validity of this assumption under increasing complexity and in particular in the case of feedback.

In this section, we will fix a linear pathway of length $N=5$ consisting of 3 inhibitions and ending in a single activation. The orienting procedure then requires a flip of node 1 and 3; in the process flipping the input stimulus from activator to inhibitor. We then add a single feedback in the form of an inhibition to this pathway.
 
Fig. \ref{fig:comparefeedback}, like Fig. \ref{fig:comparepathway}, compares the evolution of each node in this pathway under unbiased conditions in using the test model in Section \ref{testmodel} with an inhibition feedback from the output node 5 to node 3. The purpose is to validate Theorem \ref{thm2}; noting that output nodes are exactly the same between oriented and unoriented pathways.

\begin{figure}[H]
\centering
\includegraphics[width=0.8\textwidth]{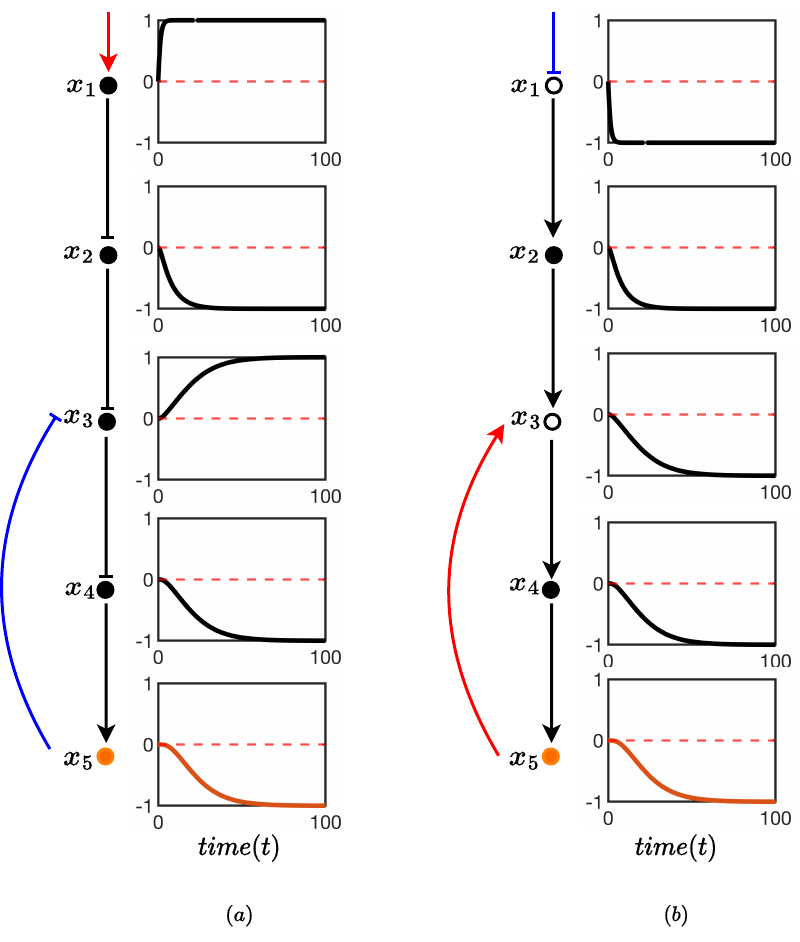}
 \caption{Here we compare an unoriented pathway with feedback (a) against its oriented form (b) for the test model in Section \ref{testmodel} under unbiased conditions ($\phi = 0$). The parameters and formatting used is the same as that in Fig. \ref{fig:comparepathway}. The figure shows equivalency in the two networks and validates Theorem \ref{thm2}.}
\label{fig:comparefeedback}
\end{figure}

Figs \ref{fig:dss_feedback} and \ref{fig:dtau_feedback} show an array of pathways of length $N=5$. These pathways all require two flips at nodes 1 and 3 to orient. The figures show how the errors $\delta_{ss}$ (Fig. \ref{fig:dss_feedback} showing error in steady state) and $\delta_\tau$ (Fig. \ref{fig:dtau_feedback} describing discrepancy in temporal behaviour) are influenced by bias. In each figure, the array consists of the same pathway subject to all combinations of inhibition feedback. The first column consists of feedback a distance of a single node, the second column, a feedback a distance of two nodes, and in the last column a feedback is a distance of three and four nodes. The top charts display feedback which is introduced further downstream than the charts at the bottom. The pathways are oriented the same in all cases but the process leaves the feedback sometimes as an activator (red) and sometimes as an inhibitor (blue) depending on its location in the pathway. The charts are plotted in these colours respectively to highlight this feature across the chart which ultimately determines if the feedback is a negative or positive feedback.

\begin{figure}[H]
\centering
\includegraphics[width=0.8\textwidth]{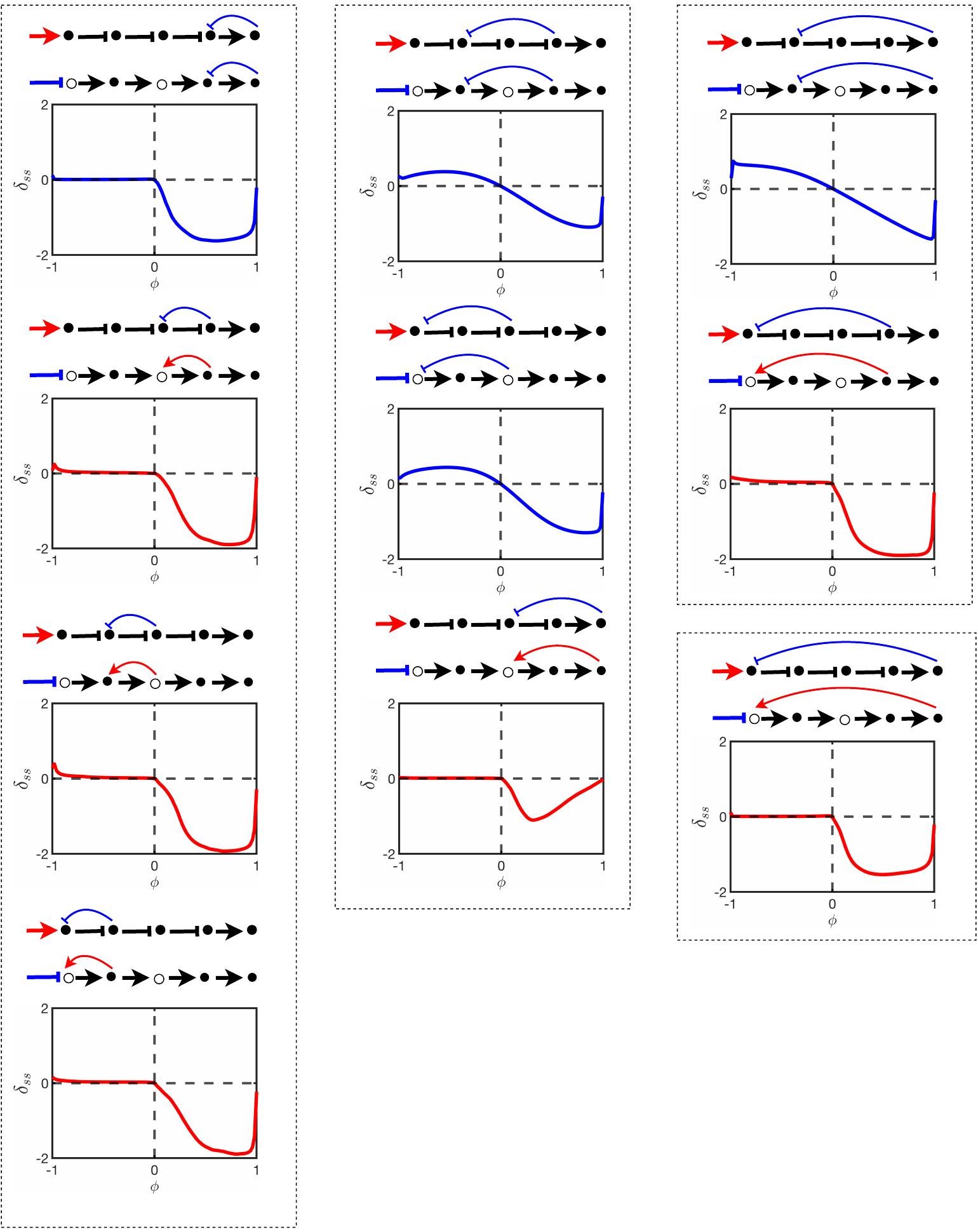}
 \caption{The figure shows the steady-state error of different signalling pathways with five nodes, as determined by varying $\phi$  values under an external stimulus. The y-axis on each chart represents the steady-state error ranging from -2 to 2, while the x-axis shows $\phi$ values ranging from -1 to 1. The extremes in each graph indicate where the non-oriented and oriented pathways disagree significantly, and the flat line portion of the chart indicates the range of robustness in the steady state under increasing levels of bias.}
\label{fig:dss_feedback}
\end{figure}

\begin{figure}[H]
\centering
\includegraphics[width=0.8\textwidth]{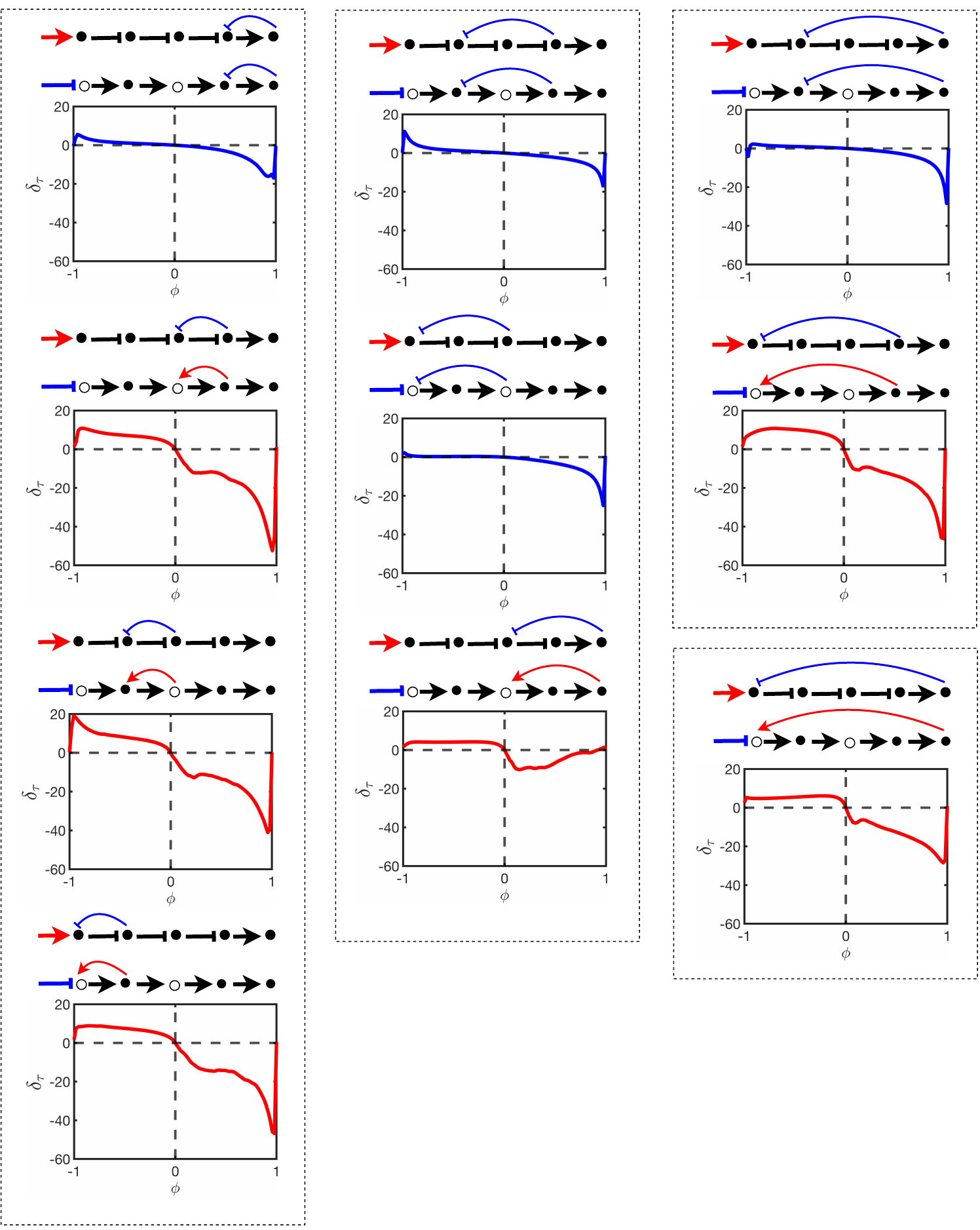}
 \caption{This figure shows the $\delta_{\tau}$ for the same pathways in Fig. \ref{fig:dss_feedback}, as determined by varying $\phi$  values under an external stimulus. The y-axis represents $\delta_{\tau}$, while the x-axis shows $\phi$ values ranging from -1 to 1. The figure arrangement is the same as that of Fig. \ref{fig:dss_feedback}.}
\label{fig:dtau_feedback}
\end{figure}

The following key observations are found across our numerical investigation and exemplified in Figs \ref{fig:dss_feedback} and \ref{fig:dtau_feedback}.
\begin{enumerate}
    \item Orientability is more robust with the addition of negative (or positive) bias than the alternative if the oriented pathway is inhibited (or activated, respectively). This was found previously in the case of a simple linear pathway but this phenomena seems to be also valid with the introduction of complexity such as feedback (and later we will show this to be also the case for a significantly more complex network). Feedback tends to even amplify the significance of negative versus positive bias, especially in the case of the steady state (and excepting for our next observation).
    \item Orientability (in the \textit{steady state} but not the temporal behaviour) even close to the unbiased case is compromised in the specific case where (a) there is negative feedback (blue curves in Fig \ref{fig:dss_feedback}) \textit{and} (b) when the feedback loop includes at least one node which is flipped in the orienting process.
    \item Orientability (in the \textit{temporal behaviour} but not the steady state) even close to the unbiased case is compromised under positive feedback (red curves in  Fig. \ref{fig:dtau_feedback}) but not under negative feedback (red curves). This compromise is still more pronounced in the case of positive bias due to the inhibition stimulation of the pathway. 
\end{enumerate}

\subsection{Network complexity and pathway cross-talk}

As it is difficult to do a systematic numerical study of complex network topologies and pathway crosstalk we will instead only look anecdotally at a realistic network and in particular the EGFR/HER2 signalling network model published by Yamaguchi \textit{et al.} in 2014 \cite{yamaguchi2014signaling}. Using Algorithm \ref{algorithm2} to orient a general network requires first to assign nodes to pathways (as defined explicitly in this manuscript not necessarily in a general biological sense) based on key flows of information. Each node must belong to a pathway and each pathway has to have an input node and an output node with edges directed from input to output. Of course, this leaves the choice of pathways an open problem with many solutions. We attempt to do this in such a way that makes the most amount of sense biologically and favoring input nodes as nodes with external stimulus and aligning as much as possible to conventional pathway families. The unoriented network diagram and its oriented form are shown in Fig. \ref{fig:network}(a) and (b) respectively. Pathway edges are represented in black. There are six mathematical pathways in the network and some can be collectively grouped into four biological pathways. To satisfy our mathematical definition, a single node (representing the protein PTEN) which acts as an intermediary between the biological pathways $P_3$ and $P_4$ is technically labelled as its own pathway. The red and blue colours indicate activation and inhibition cross-talks and/or external stimulus, respectively. The diagram includes various coupled biological pathways, such as Wnt/$\beta$-catenin ($P_1$), EGFR family ($P_2$), Notch family ($P_3$), and TNF-R pathway ($P_4$). The two output nodes labelled $x_1$ and $x_2$ are shown in orange and we do not demand similarity in any of the other nodes. We choose these two nodes as the output as this model has been constructed to focus on the EGFR pathway and how it is affected by the other signalling pathways \cite{yamaguchi2014signaling}.

Immediately we highlight the power of the oriented form. In ensuring that all pathways are oriented, we can see the effect of the stimulii on this network. In the oriented form all stimulii except the stimulii of $P_4$ are contributing to activation of the output via the direct pathways. Furthermore, it is more easy to see the effect of the cross talk interactions. All cross-talk in the oriented form is positively stimulatory. That is, they all contribute to positive feedback or feed forward through cross-talk. The exception, of course, is the cross-talk with $P_4$ which is inhibited by its stimulus and through single inhibitory cross-talk with other pathways also acts as a positive stimulant. It is satisfying therefore that a network which is actively stimulated and contains positive feedback behaves (despite its added complexity) as a simple pathway in regards to the bias in the model. We can see this explicitly in the plots of $\delta_{ss}$ in Fig. \ref{fig:dss_network_twoway}. In particular, we notice that error $\delta_{ss}$ is greatly restricted to negative bias which has been an observation derived from our simpler tests for activated oriented pathways with positive feedback.

\begin{figure}[H]
\centering
\includegraphics[width=1\textwidth]{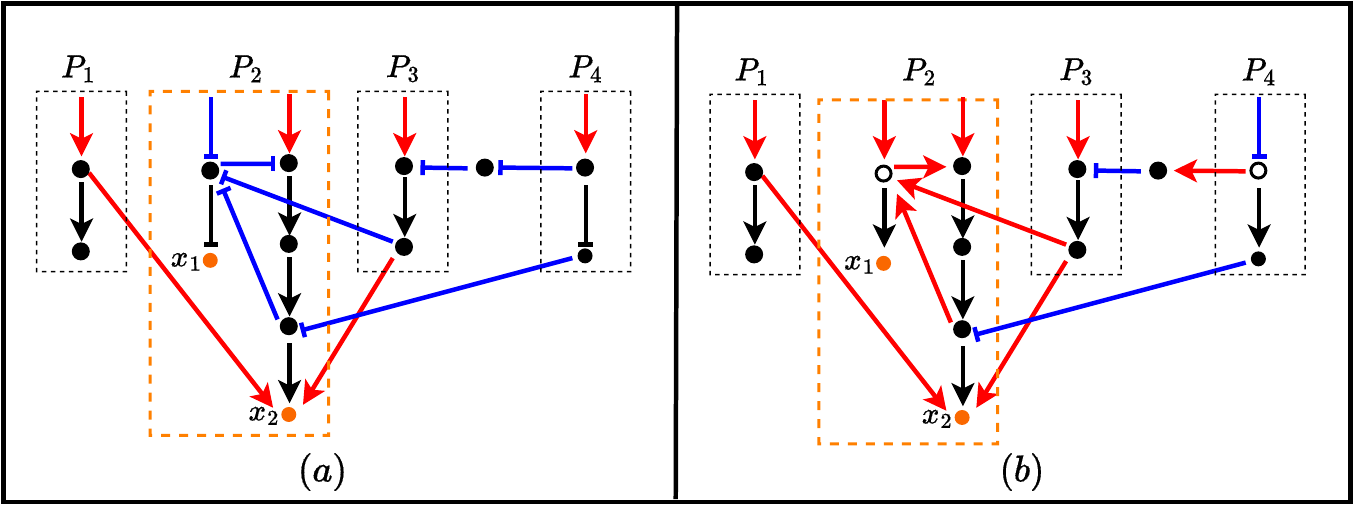}
\caption{A network model of EGFR/HER2 and its integration with other pathways as described in \cite{yamaguchi2014signaling} in both its unoriented form (a) and oriented form (b). The dashed boxes represent the four underlying interacting biological pathways. These are $P_1$ Wnt/$\beta$-catenin, $P_2$ EGFR, $P_3$ Notch and $P_4$ TNF-R. On the other hand, individual columns (shown with black edges) represent the six pathways as we define them mathematically in this manuscript. The red and blue arrows indicate activation and inhibition cross-talks and stimulii, respectively. We focus on the outputs of the model in \cite{yamaguchi2014signaling} which are indicated in orange and labelled $x_1$ and $x_2$.}
\label{fig:network}
\end{figure}

\begin{figure}[H]
\centering
\begin{subfigure}{.47\textwidth}
    \centering
    \includegraphics[width=.95\linewidth]{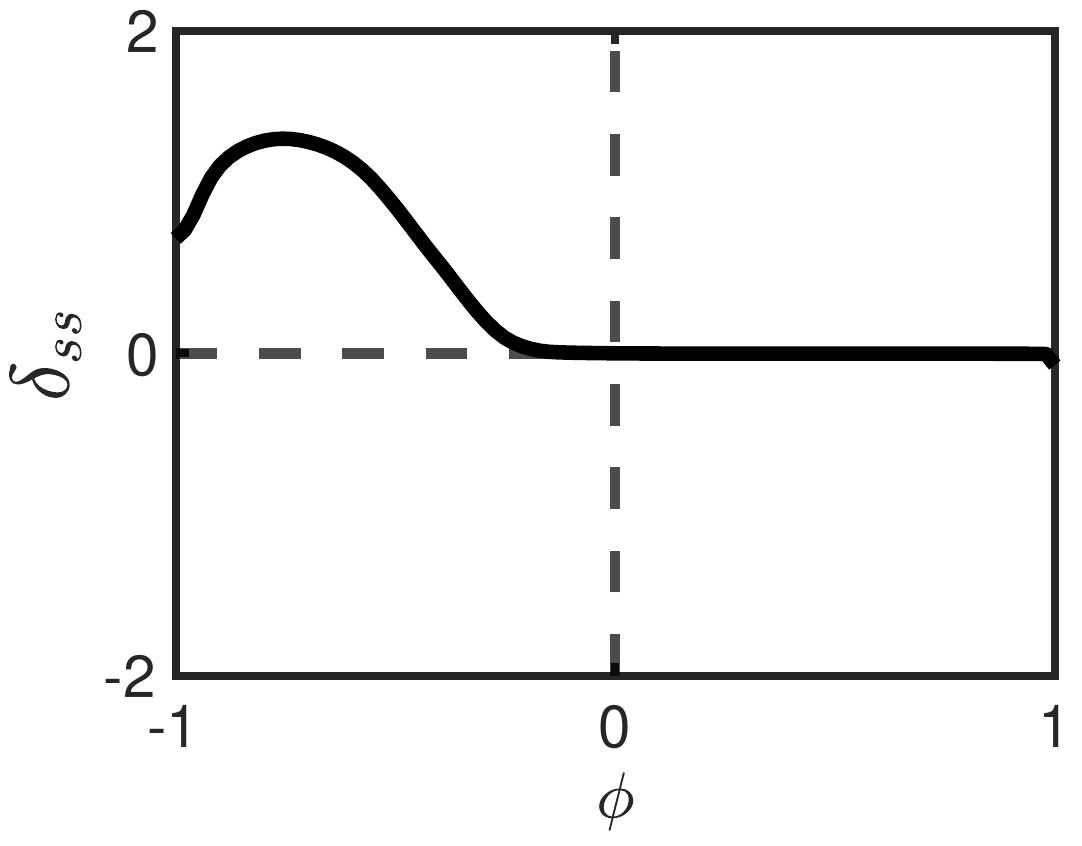}  
    \caption{}
    \label{fig:dss_x4_twoway}
\end{subfigure}
\begin{subfigure}{.47\textwidth}
    \centering
    \includegraphics[width=.95\linewidth]{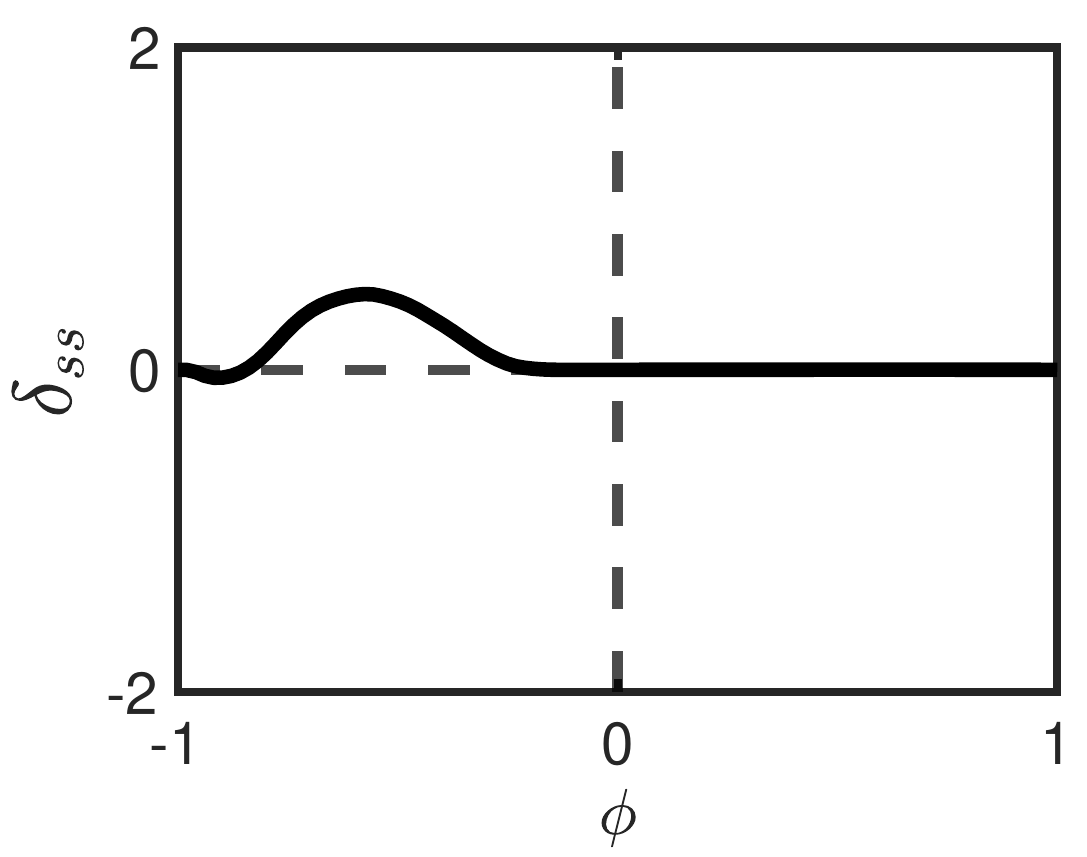}  
    \caption{}
    \label{fig:dss_x8_twoway}
\end{subfigure}
\caption{Steady state error $(\delta_{ss})$ for output nodes $x_1$ and $x_2$ using the test model described in Section \ref{testmodel}. The steady-state error accumulates as $\phi < 0$, but both networks maintain near equivalence when $\phi > 0$.}
\label{fig:dss_network_twoway}
\end{figure}

We can also visualise (more explicitly than simply plotting $\delta_\tau$ for this system) the discrepancy in the temporal evolution of the nodes $x_1$ and $x_2$ by plotting $x_1(t)$ and $x_2(t)$ explicitly for $\phi = 0$ and $\phi = \pm 0.5$ (representative of positive and negative bias respectively). This is done in the set of charts in Fig. \ref{fig:x4_and_x8}.  The first row of the figure shows the non-oriented output of $x_1$ (purple), while the second row shows the oriented network output. The third and fourth rows display the non-oriented and oriented network outputs for $x_2$ (blue) respectively. We observe perfect agreement for the unbiased model (centre column) as expected and very good agreement for the positively biased model as a result of positive stimulus and positive feedback throughout the oriented network (right column). Furthermore, as expected, the negatively biased model exhibits significant discrepancies between the oriented and unoriented forms in line with the observations of $\delta_{ss}$ in Fig. \ref{fig:dss_network_twoway} and consistent with observations of simpler pathways.

\begin{figure}[H]
\centering
\includegraphics[width=0.8\textwidth]{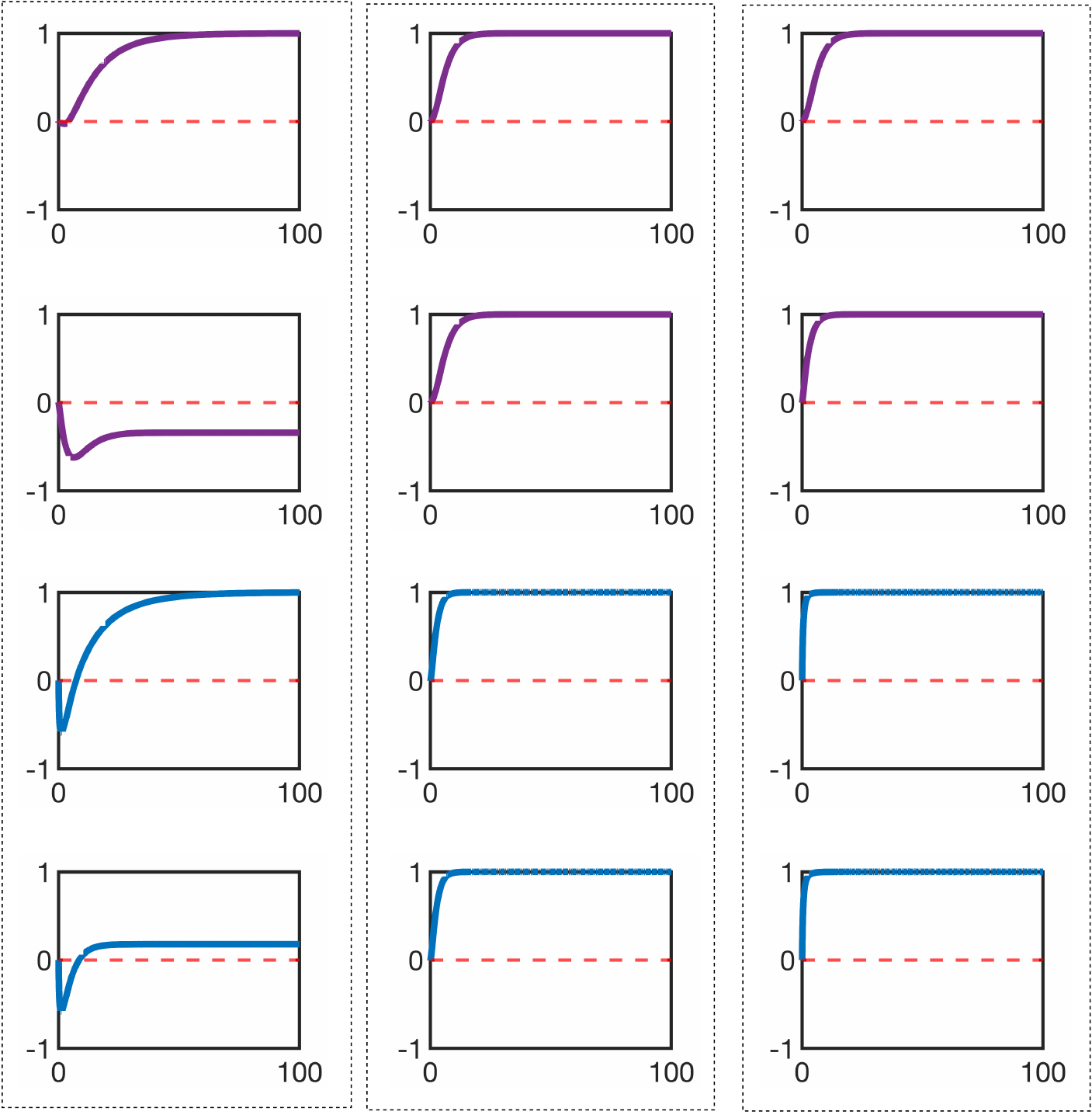}
\caption{Diagrams show the output of $x_1$(purple) and $x_2$(blue) in the model shown in Fig. \ref{fig:network}. The simulation output is averaged over many combinations of parameters $\alpha$ and $\beta$ for each edge and shown in three columns for $\phi =-0.5$ (left), $\phi = 0$ (centre), and $\phi = 0.5$ (right). The first and third rows display the non-oriented output of $x_1$ and $x_2$, while the second and the fourth row show the respective oriented network outputs.}
\label{fig:x4_and_x8}
\end{figure}


\section{Discussion and conclusion}\label{sec:conclusion}

This manuscript is concerned with the appropriateness of reducing the complexity of a biochemical network/pathway by assuming that inhibition and activation relationships behave as perfect inverses of each other. Such reduction is common as a method of identification of topological properties and possible qualitative behaviours but this reduction is only appropriate under certain conditions explored in this manuscript.

We define an oriented form as a standard/simplest way of reducing a pathway/network. We also prove that if a model is so-called `unbiased' and `symmetric' then reduction to the oriented form by successive exchanging of activations and inhibitions (keeping track of all operations by `flipping' the sign of the relevant nodes) produces a model indistinguishable (from the perspective of measured output) from the unoriented form.

We focus our study on the nature of (positive and negative) bias in the formation of error between an unoriented form and its reduced oriented form (both in the case of differing steady states and differing dynamic properties). We restrict ourself to symmetric models only. Our investigation is computational. Pathways in their oriented form have more easily identifiable structure and our investigation leads us to propose the following general conjectures for symmetrically modelled pathways and networks. 
\begin{enumerate}
\item For many systems bias can be added to a model without generating significant errors in the orienting process. When errors appear they do so rapidly and suddenly and therefore care should be exercised when reducing a network qualitatively in this way.
\item When the oriented network is \textit{externally stimulated} then the reduction to the oriented form produces significant errors if the model is negatively biased. If it is \textit{externally inhibited} errors are formed if the model is positively biased.
\item Errors are compounded as more approximations (replacing activators for inhibitors and flipping a node to compensate) are taken. Furthermore, if these approximations are made further downstream, the errors are expected to be larger.
\item The former statements extend to pathways with feedback. However, error in the steady state is observed for both negative and positive bias models if the feedback is negative (and the loop includes a flipped node) and error in the temporal behaviour is observed for both negative and positive bias models if the feedback is positive. 
\item The conclusions seem to be consistent if these general properties can be identified in the oriented form of more complex network models.
\end{enumerate}

This study has many limitations and suggests very strongly more rigorous work that should be done. We have opted not to look into detail at symmetry in this paper. This is partly due to the length of the paper but also because bias seems more common/significant in mathematical models in the literature. Furthermore, we do not have an objective definition to measure the extent of bias that makes sense. We have a measure of bias for the model used in this paper $\phi$ but this measure is arbitrary. It is therefore important that the scale of $\phi$ not be given too much emphasis. We also investigate one single model. We attempted to create this model with symmetry but also with the kinds of nonlinear relationships common in biochemical network models. It remains as future work to investigate the generalisability and analysis of the conjectures posed in this manuscript to a much more broad class of model. 

The observations found in this study form a framework with which to assess biochemical networks and determine if qualitative reductions are appropriate or if typologies are likely to be more idiosyncratic based on the specific quantitative model used to simulate it.

\backmatter

\bibliography{sn-bibliography}


\begin{thebibliography}{15}
\ifx \bisbn   \undefined \def \bisbn  #1{ISBN #1}\fi
\ifx \binits  \undefined \def \binits#1{#1}\fi
\ifx \bauthor  \undefined \def \bauthor#1{#1}\fi
\ifx \batitle  \undefined \def \batitle#1{#1}\fi
\ifx \bjtitle  \undefined \def \bjtitle#1{#1}\fi
\ifx \bvolume  \undefined \def \bvolume#1{\textbf{#1}}\fi
\ifx \byear  \undefined \def \byear#1{#1}\fi
\ifx \bissue  \undefined \def \bissue#1{#1}\fi
\ifx \bfpage  \undefined \def \bfpage#1{#1}\fi
\ifx \blpage  \undefined \def \blpage #1{#1}\fi
\ifx \burl  \undefined \def \burl#1{\textsf{#1}}\fi
\ifx \doiurl  \undefined \def \doiurl#1{\url{https://doi.org/#1}}\fi
\ifx \betal  \undefined \def \betal{\textit{et al.}}\fi
\ifx \binstitute  \undefined \def \binstitute#1{#1}\fi
\ifx \binstitutionaled  \undefined \def \binstitutionaled#1{#1}\fi
\ifx \bctitle  \undefined \def \bctitle#1{#1}\fi
\ifx \beditor  \undefined \def \beditor#1{#1}\fi
\ifx \bpublisher  \undefined \def \bpublisher#1{#1}\fi
\ifx \bbtitle  \undefined \def \bbtitle#1{#1}\fi
\ifx \bedition  \undefined \def \bedition#1{#1}\fi
\ifx \bseriesno  \undefined \def \bseriesno#1{#1}\fi
\ifx \blocation  \undefined \def \blocation#1{#1}\fi
\ifx \bsertitle  \undefined \def \bsertitle#1{#1}\fi
\ifx \bsnm \undefined \def \bsnm#1{#1}\fi
\ifx \bsuffix \undefined \def \bsuffix#1{#1}\fi
\ifx \bparticle \undefined \def \bparticle#1{#1}\fi
\ifx \barticle \undefined \def \barticle#1{#1}\fi
\bibcommenthead
\ifx \bconfdate \undefined \def \bconfdate #1{#1}\fi
\ifx \botherref \undefined \def \botherref #1{#1}\fi
\ifx \url \undefined \def \url#1{\textsf{#1}}\fi
\ifx \bchapter \undefined \def \bchapter#1{#1}\fi
\ifx \bbook \undefined \def \bbook#1{#1}\fi
\ifx \bcomment \undefined \def \bcomment#1{#1}\fi
\ifx \oauthor \undefined \def \oauthor#1{#1}\fi
\ifx \citeauthoryear \undefined \def \citeauthoryear#1{#1}\fi
\ifx \endbibitem  \undefined \def \endbibitem {}\fi
\ifx \bconflocation  \undefined \def \bconflocation#1{#1}\fi
\ifx \arxivurl  \undefined \def \arxivurl#1{\textsf{#1}}\fi
\csname PreBibitemsHook\endcsname

\bibitem{jiang2022identification}
\begin{barticle}
\bauthor{\bsnm{Jiang}, \binits{R.}},
\bauthor{\bsnm{Singh}, \binits{P.}},
\bauthor{\bsnm{Wrede}, \binits{F.}},
\bauthor{\bsnm{Hellander}, \binits{A.}},
\bauthor{\bsnm{Petzold}, \binits{L.}}:
\batitle{Identification of dynamic mass-action biochemical reaction networks
  using sparse bayesian methods}.
\bjtitle{PLoS computational biology}
\bvolume{18}(\bissue{1}),
\bfpage{1009830}
(\byear{2022})
\end{barticle}
\endbibitem

\bibitem{vaseghi2001signal}
\begin{barticle}
\bauthor{\bsnm{Vaseghi}, \binits{S.}},
\bauthor{\bsnm{Macherhammer}, \binits{F.}},
\bauthor{\bsnm{Zibek}, \binits{S.}},
\bauthor{\bsnm{Reuss}, \binits{M.}}:
\batitle{Signal transduction dynamics of the protein
  kinase-a/phosphofructokinase-2 system in saccharomyces cerevisiae}.
\bjtitle{Metabolic Engineering}
\bvolume{3}(\bissue{2}),
\bfpage{163}--\blpage{172}
(\byear{2001})
\end{barticle}
\endbibitem

\bibitem{swameye2003identification}
\begin{barticle}
\bauthor{\bsnm{Swameye}, \binits{I.}},
\bauthor{\bsnm{M{\"u}ller}, \binits{T.}},
\bauthor{\bsnm{Timmer}, \binits{J.}},
\bauthor{\bsnm{Sandra}, \binits{O.}},
\bauthor{\bsnm{Klingm{\"u}ller}, \binits{U.}}:
\batitle{Identification of nucleocytoplasmic cycling as a remote sensor in
  cellular signaling by databased modeling}.
\bjtitle{Proceedings of the National Academy of Sciences}
\bvolume{100}(\bissue{3}),
\bfpage{1028}--\blpage{1033}
(\byear{2003})
\end{barticle}
\endbibitem

\bibitem{rao2014model}
\begin{barticle}
\bauthor{\bsnm{Rao}, \binits{S.}},
\bauthor{\bparticle{der} \bsnm{Schaft}, \binits{A.v.}},
\bauthor{\bsnm{Eunen}, \binits{K.v.}},
\bauthor{\bsnm{Bakker}, \binits{B.M.}},
\bauthor{\bsnm{Jayawardhana}, \binits{B.}}:
\batitle{A model reduction method for biochemical reaction networks}.
\bjtitle{BMC systems biology}
\bvolume{8},
\bfpage{1}--\blpage{17}
(\byear{2014})
\end{barticle}
\endbibitem

\bibitem{angeli2004detection}
\begin{barticle}
\bauthor{\bsnm{Angeli}, \binits{D.}},
\bauthor{\bsnm{Ferrell~Jr}, \binits{J.E.}},
\bauthor{\bsnm{Sontag}, \binits{E.D.}}:
\batitle{Detection of multistability, bifurcations, and hysteresis in a large
  class of biological positive-feedback systems}.
\bjtitle{Proceedings of the National Academy of Sciences}
\bvolume{101}(\bissue{7}),
\bfpage{1822}--\blpage{1827}
(\byear{2004})
\end{barticle}
\endbibitem

\bibitem{rombouts2021dynamic}
\begin{barticle}
\bauthor{\bsnm{Rombouts}, \binits{J.}},
\bauthor{\bsnm{Gelens}, \binits{L.}}:
\batitle{Dynamic bistable switches enhance robustness and accuracy of cell
  cycle transitions}.
\bjtitle{PLoS Computational Biology}
\bvolume{17}(\bissue{1}),
\bfpage{1008231}
(\byear{2021})
\end{barticle}
\endbibitem

\bibitem{russo2009equilibria}
\begin{barticle}
\bauthor{\bsnm{Russo}, \binits{C.}},
\bauthor{\bsnm{Giuraniuc}, \binits{C.}},
\bauthor{\bsnm{Blossey}, \binits{R.}},
\bauthor{\bsnm{Bodart}, \binits{J.-F.}}:
\batitle{On the equilibria of the mapk cascade: Cooperativity, modularity and
  bistability}.
\bjtitle{Physica A: Statistical Mechanics and its Applications}
\bvolume{388}(\bissue{24}),
\bfpage{5070}--\blpage{5080}
(\byear{2009})
\end{barticle}
\endbibitem

\bibitem{schwartz1999interactions}
\begin{barticle}
\bauthor{\bsnm{Schwartz}, \binits{M.A.}},
\bauthor{\bsnm{Baron}, \binits{V.}}:
\batitle{Interactions between mitogenic stimuli, or, a thousand and one
  connections}.
\bjtitle{Current opinion in cell biology}
\bvolume{11}(\bissue{2}),
\bfpage{197}--\blpage{202}
(\byear{1999})
\end{barticle}
\endbibitem

\bibitem{navlakha2014topological}
\begin{barticle}
\bauthor{\bsnm{Navlakha}, \binits{S.}},
\bauthor{\bsnm{He}, \binits{X.}},
\bauthor{\bsnm{Faloutsos}, \binits{C.}},
\bauthor{\bsnm{Bar-Joseph}, \binits{Z.}}:
\batitle{Topological properties of robust biological and computational
  networks}.
\bjtitle{Journal of the Royal Society Interface}
\bvolume{11}(\bissue{96}),
\bfpage{20140283}
(\byear{2014})
\end{barticle}
\endbibitem

\bibitem{glass1975classification}
\begin{barticle}
\bauthor{\bsnm{Glass}, \binits{L.}}:
\batitle{Classification of biological networks by their qualitative dynamics}.
\bjtitle{Journal of theoretical biology}
\bvolume{54}(\bissue{1}),
\bfpage{85}--\blpage{107}
(\byear{1975})
\end{barticle}
\endbibitem

\bibitem{glass1972co}
\begin{barticle}
\bauthor{\bsnm{Glass}, \binits{L.}},
\bauthor{\bsnm{Kauffman}, \binits{S.A.}}:
\batitle{Co-operative components, spatial localization and oscillatory cellular
  dynamics}.
\bjtitle{Journal of theoretical biology}
\bvolume{34}(\bissue{2}),
\bfpage{219}--\blpage{237}
(\byear{1972})
\end{barticle}
\endbibitem

\bibitem{gilbert2006petri}
\begin{bchapter}
\bauthor{\bsnm{Gilbert}, \binits{D.}},
\bauthor{\bsnm{Heiner}, \binits{M.}}:
\bctitle{From petri nets to differential equations--an integrative approach for
  biochemical network analysis}.
In: \bbtitle{Petri Nets and Other Models of Concurrency-ICATPN 2006: 27th
  International Conference on Applications and Theory of Petri Nets and Other
  Models of Concurrency, Turku, Finland, June 26-30, 2006. Proceedings 27},
pp. \bfpage{181}--\blpage{200}
(\byear{2006}).
\bcomment{Springer}
\end{bchapter}
\endbibitem

\bibitem{alberts2002general}
\begin{bchapter}
\bauthor{\bsnm{Alberts}, \binits{B.}},
\bauthor{\bsnm{Johnson}, \binits{A.}},
\bauthor{\bsnm{Lewis}, \binits{J.}},
\bauthor{\bsnm{Raff}, \binits{M.}},
\bauthor{\bsnm{Roberts}, \binits{K.}},
\bauthor{\bsnm{Walter}, \binits{P.}}:
\bctitle{General principles of cell communication}.
In: \bbtitle{Molecular Biology of the Cell. 4th Edition}.
\bpublisher{Garland Science}, \blocation{???}
(\byear{2002})
\end{bchapter}
\endbibitem

\bibitem{azeloglu2015signaling}
\begin{barticle}
\bauthor{\bsnm{Azeloglu}, \binits{E.U.}},
\bauthor{\bsnm{Iyengar}, \binits{R.}}:
\batitle{Signaling networks: information flow, computation, and decision
  making}.
\bjtitle{Cold Spring Harbor perspectives in biology}
\bvolume{7}(\bissue{4}),
\bfpage{005934}
(\byear{2015})
\end{barticle}
\endbibitem

\bibitem{yamaguchi2014signaling}
\begin{barticle}
\bauthor{\bsnm{Yamaguchi}, \binits{H.}},
\bauthor{\bsnm{Chang}, \binits{S.}},
\bauthor{\bsnm{Hsu}, \binits{J.}},
\bauthor{\bsnm{Hung}, \binits{M.}}:
\batitle{Signaling cross-talk in the resistance to her family receptor targeted
  therapy}.
\bjtitle{Oncogene}
\bvolume{33}(\bissue{9}),
\bfpage{1073}--\blpage{1081}
(\byear{2014})
\end{barticle}
\endbibitem

\end{thebibliography}
\end{document}